\DeclareMathOperator{\Gal}{\rm Gal}
\newcommand{\F}{\mathbb F}
\newcommand{\barF}{\overline{\mathbb F}}
\newcommand{\barFq}{\overline{\mathbb F}_q}
\newcommand{\Proj}{\mathbb P}
\newcommand{\GL}{\mathrm{GL}}
\newcommand{\PGL}{\mathrm{PGL}}
\newcommand{\hide}[1]{}
\newtheorem{dummy}{Dummy}
\newtheorem{lemma}[dummy]{Lemma}
\newtheorem{theorem}[dummy]{Theorem}
\newtheorem{cor}[dummy]{Corollary}
\theoremstyle{definition}
\theoremstyle{remark}
\newtheorem{rem}[dummy]{Remark}
\begin{document}%___________________________________________________________

\bibliographystyle{amsalpha}
\author{Sandro Mattarei}
\address{School of Mathematics and Physics\\
  University of Lincoln\\
  Brayford Pool\\
  Lincoln, LN6 7TS\\
  United Kingdom}
\email{smattarei@lincoln.ac.uk}
\author{Marco Pizzato}
\address{Dipartimento di Matematica\\
Universit\`a degli Studi di Trento\\
via Sommarive 14\\
I-38123 Povo (Trento)\\
Italy}
\email{marco.pizzato1@gmail.com}

\title{Cubic rational expressions over a finite field}
%\date{\today}

\begin{abstract}
We study and partially classify cubic rational expressions $g(x)/h(x)$ over a finite field $\F_q$,
up to pre- and post-composition with independent M\"obius transformations.
In particular, we obtain a full classification when $q$ is even,
and prove an upper bound of $4q$ for the number of equivalence classes when $q$ is odd.
\end{abstract}

\subjclass[2000]{Primary 12E05}
\keywords{Cubic rational function}

\maketitle

\section{Introduction}\label{sec:intro}

The initial motivation for this study arose from an enumeration problem for irreducible polynomials over a finite field satisfying certain symmetries.
Gauss gave a formula for the number of all irreducible monic polynomials of a given degree over a field $\F_q$.
A similar formula counting the self-reciprocal irreducible monic polynomials of degree $2n$ was found by Carlitz in~\cite{Carlitz:srim}.
Here a polynomial $F(x)$ over a field is {\em self-reciprocal} if $x^{\deg F}\cdot F(1/x)=F(x)$.
It is easy to see that any irreducible self-reciprocal polynomial of degree greater than one has even degree $2n$, and that it can actually be written in the form
$F(x)=x^n\cdot f(x+1/x)$, for some polynomial $f(x)$ of degree $n$.
This led Ahmadi, in~\cite{Ahmadi:Carlitz}, to study polynomials
obtained from such $f(x)$ through a more general {\em quadratic transformation,} namely, polynomials of the form
$F(x)=h(x)^n\cdot f\bigl(g(x)/h(x)\bigr)$,
where $g(x)$ and $h(x)$ are coprime polynomials with $\max(\deg g,\deg h)=2$, and $n=\deg f$.
%The special case of {\em srim} polynomials arises when $g(x)/h(x)=(x^2+1)/x=x+x^{-1}$.
Over $\F_q$ with $q$ odd, the enumeration formula found in~\cite[Theorem~2]{Ahmadi:Carlitz} for
the number of monic irreducible polynomials among those, of degree $2n>2$ and for a given $g(x)/h(x)$,
equals Carlitz's count for the special case where $g(x)/h(x)=(x^2+1)/x=x+x^{-1}$.

The fact that Carlitz's formula extends unchanged to arbitrary quadratic transformations, at least for $q$ odd,
was given a simple explanation in~\cite{MatPiz:self-reciprocal}.
It depends on the fact that such enumeration is essentially unaffected
when the quadratic rational expression $g(x)/h(x)$ is replaced with any other obtained from it by composition, on both sides,
with independent M\"obius transformations (meaning rational expressions of degree $1$).
We call such rational expressions {\em equivalent} in this paper.
It is easy to see that all quadratic rational expressions $g(x)/h(x)$ over an algebraically closed field of odd characteristic are equivalent.
Over a finite field $\F_q$ of odd characteristic they split into two equivalence classes
(see Section~\ref{sec:quadratic}).
%(See~\cite[Section~2]{MatPiz:self-reciprocal} for details, also over arbitrary fields including those of characteristic two.)
However, the expressions in each of both classes happen to give the same enumeration formula for the irreducible polynomials $F(x)$ arising through them.
%Incidentally, because in odd characteristic the quadratic rational expression $(x^2+1)/x$ is equivalent to $x^2$,
%that enumeration formula also counts the number of irreducible polynomial of the form $f(x^2)$ over the field $\F_q$ with $q$ odd,
%which is a special case of a result of Cohen~\cite[Theorem~3]{Cohen:irreducible}.

%More generally, it seems natural
A natural extension is a study of irreducible polynomials $F(x)$ arising through a transformation of higher degree,
that is, obtained as described above but with $g(x)/h(x)$ a rational expression (also commonly called {\em rational function}) of arbitrary degree $r$.
The special case where $g(x)=1$ is of practical relevance because of a direct connection with {\em transformation shift registers,}
which have numerous applications including in the design of stream ciphers,
see~\cite[Section~3]{Cohen+:TSR}.
 %{\color{blue}\fbox{Drop this}
 %In a similar fashion as the above connection of the quadratic case with self-reciprocal polynomials,
 %a special case of polynomials arising through a transformation of degree $r$
 %occurs when looking at polynomials invariant under (pre-composition with)
 %a M\"obius transformation of (compositional) order $r$,
 %see~\cite[Lemmas~11 and~12]{MatPiz:self-reciprocal} for the cases of order three and four.
 %However, unlike for quadratic transformations, those transformations are very far from the general case, as we point out in Remark~\ref{rem:cubic-invariant}.
 %}
Polynomials arising through a more general cubic transformation were addressed in~\cite{MatPiz:cubic-transformation},
where various enumeration formulas were found.

For such investigations it is desirable to have a classification up to equivalence, or at least a partial classification,
of cubic rational expressions (or {\em cubic expressions} for short) over a finite field.
That is the main goal of this paper.
%That appears not to be available in the literature in the explicit form needed, and is the main goal of this paper.
We should mention that the study of rational expressions up to equivalence may be interpreted in terms of certain extensions of function fields.
At the end of Section~\ref{sec:equivalence} we review some literature on cubic extensions of function fields.
We discuss its connections with our work, and point out how the available literature falls short
of providing the explicit results that we need for applications such as those in~\cite{MatPiz:cubic-transformation}.
In the present paper we take a more direct approach and bypass the interpretation in terms of function fields.

As a preliminary step to understanding cubic expressions over a finite field, in Section~\ref{sec:cubic}
we describe a classification of cubic expressions over an algebraically closed field $K$.
This is not a hard task, but we could not locate the desired result in the literature.
 %Before focusing on finite ground fields, in Section~\ref{sec:cubic}
 %as a starting point we describe a classification of cubic rational expressions over an algebraically closed field $K$.
 %This is an easy task, but we could not locate the desired result in the literature.
Because equivalence classes are the orbits of a certain action of the group
$\PGL_2(K)\times\PGL_2(K)$
on the $7$-dimensional variety of all cubic expressions,
%(see Lemma~\ref{lemma:coprime_probability}),
dimension reasons show that they must involve at least one parametric family.
%such as the one given in~\cite[Example~8.2]{Osserman:given_ramification}.
In fact, according to Theorem~\ref{thm:cubic_expr_alg_closed}, every cubic expression over an algebraically closed  field
of characteristic different from two or three is either equivalent to $x^3$, or belongs to a certain $1$-parameter family.
The elements of the latter are not pairwise inequivalent, but one can explicitly tell which values of the parameter give rise to equivalent expressions,
in terms of a cross-ratio formed from the (up to four) ramification points of each expression.
We supplement that classification with corresponding classifications in the small characteristics three and two.
%These easy results may be known in some form, but we need precise and explicit statements in order to proceed with the next step,
%which is passing from $K$ to a finite ground field.

%Obtaining a classification over a finite field $\F_q$ from that over its algebraic closure $\barFq$
%may be be described in terms of Galois cohomology, but this provides no advantage
%over the more concrete approach we take in Section~\ref{sec:cubic_finite}.
%Classifying cubic rational expressions over a finite field $\F_q$ is now a descent problem

Passing from an algebraically closed ground field to a classification of cubic expressions over a finite field $\F_q$, which we address in Section~\ref{sec:cubic_finite},
requires understanding how the stabilizer in $\PGL_2(\F_q)\times\PGL_2(\F_q)$ of a cubic expression may permute its ramification points
(in $\barFq\cup\{\infty\}$).
While descending from the algebraic closure $\barFq$ to $\F_q$
could be framed in terms of a certain Galois cohomology group, that description would provide no saving in calculations
over the direct approach adopted here.
Theorem~\ref{thm:cubic_expr_finite} classifies the cubic expressions, over any finite field of characteristic at least five,
that have at most three (distinct) ramification points.
Theorem~\ref{thm:cubic_expr_finite_3} does the same in characteristic three.
%We achieve a partial such classification
%in Theorems~\ref{thm:cubic_expr_finite} (for characteristic at least five) and~\ref{thm:cubic_expr_finite_3} (for characteristic three),
%limited to those expressions having at most three (distinct) ramification points.

In Section~\ref{sec:small_char} we obtain a corresponding result in characteristic two, which is Theorem~\ref{thm:cubic_expr_finite_2}.
Although that statement is more complicated than those in the odd characteristics,
it classifies the totality of cubic expressions over a finite field of characteristic two,
%in Theorem~\ref{thm:cubic_expr_finite_2},
because such expressions cannot have more than two ramification points.
In particular, over a finite field $\F_q$ of characteristic two there are a total of $2q+2$ equivalence classes if $q$ is a square, and $2q$ otherwise.
We also compute the cardinalities of each of those equivalence classes (that is, the orbit lengths in the group action described above),
in Theorem~\ref{thm:cubic_expr_finite_2_count}.

Because of our restriction to cubic rational expressions with at most three ramification points,
we are unable to compute the exact number of equivalence classes over $\F_q$ in odd characteristic.
However, in Section~\ref{sec:bound}
we produce an upper bound of $4q$ for the number of classes over a finite field $\F_q$ of odd characteristic.
This requires an analysis of certain stabilizers in the action of $\PGL_2(\F_q)\times\PGL_2(\F_q)$.

After setting up terminology and preliminary results in Section~\ref{sec:equivalence}, we devote Section~\ref{sec:quadratic} to
a treatment of quadratic rational expressions over a finite field, which differs from the more direct one given in~\cite{MatPiz:self-reciprocal}.
The present version emphasizes the use of ramification and branch points, and the action of  $\PGL_2(\F_q)\times\PGL_2(\F_q)$,
anticipating in a simpler setting their later use for the case of cubic rational expressions.
In particular, it includes computing the cardinalities of the two equivalence classes of quadratic rational expressions over a finite field.
% $\F_q$.

 %The study of rational expressions up to equivalence may be interpreted in terms of certain extensions of function fields.
 %At the end of Section~\ref{sec:equivalence} we review some relevant literature and discuss its connections with our work.

The research leading to this paper began when the second author was a PhD student at the University of Trento, Italy,
under the supervision of the first author.
Some of the results in this paper have appeared among other results in~\cite{Pizzato:thesis}.

%\myframe{
%- Is it worth mentioning the probability that a random cubic expression belongs to one class or another?
%
%- Interpret as classifying tensors of a certain type?
%Relation to invariant theory?
%%
%%- A bound on the number of equivalence classes of cubic maps in char three may fit as a remark in Section~\ref{sec:small_char}.
%}

\section{Preliminaries on rational expressions}\label{sec:equivalence}

Let $K$ be an arbitrary field for now.
Consider a rational expression $R(x)=g(x)/h(x)\in K(x)$, where $g(x)$ are $h(x)$ coprime
polynomials in $K[x]$.
Its {\em degree} $\deg R$ is the integer $\max(\deg g,\deg h)$;
the expression will be called {\em linear, quadratic, cubic,}
etc., when its degree equals $1,2,3,$ etc.
It is easy to see that the degree of the composite of two rational expressions
equals the product of their degrees.

In particular, the linear rational expressions
(which also go by various other names, such as {\em fractional linear transformations,} or {\em M\"obius transformations}) form a group,
which is isomorphic to the projective general linear group $\PGL_2(K)$.
Explicitly, to linear rational expressions
$A(x)=(a_{11}x+a_{12})/(a_{21}x+a_{22})$
and
$B(x)=(b_{11}x+b_{12})/(b_{21}x+b_{22})$
there correspond matrices $(a_{ij}),(b_{ij})\in\GL_2(K)$, unique up to multiplication by scalar factors,
whose product $(b_{ij})\cdot (a_{ij})$
is a matrix for the composite expression
$(B\circ A)(x)=B\bigl(A(x)\bigr)$.
The group of linear rational expressions in $K(x)$ is usually called {\em the M\"obius group} over $K$,
and for simplicity we will identify $\PGL_2(K)$ with it as above.
Thus, the M\"obius group is the group of projectivities of the projective line $\Proj^1(K)$,
which we will safely often identify with $K\cup\{\infty\}$.

The M\"obius group $\PGL_2(K)$ acts sharply 3-transitively on $\Proj^1(K)$, meaning that any ordered triple of distinct points of $\Proj^1(K)$
is taken to any other such triple by precisely one element of $\PGL_2(K)$.
In an explicit form which may be useful for calculations,
the unique M\"obius transformation which sends $\infty,0,1$ to three distinct elements $a,b,c$ of $K$, respectively, is
\[
\frac{a(b-c)x+b(c-a)}{(b-c)x+(c-a)}.
\]
This naturally extends to cases where one of $a,b,c$ equals $\infty$.

The M\"obius group is also the full group of automorphisms of the field extension $K(x)/K$,
where it acts by pre-composition (or substitution, or composition on the right).
However, in this paper we are interested in an action where a rational expression can be composed with
independent linear rational expressions on both sides.
Thus, if $R(x)\in K(x)$ is a rational expression of degree $n$, we let a pair $(B,A) \in \PGL_2(K) \times \PGL_2(K)$
act on $R(x)$ by setting $(B,A)\cdot R(x)=B(R(A^{-1}(x)))$.
This defines an action of the group $G(K)=\PGL(2,K)\times\PGL(2,K)$ on $K(x)$,
and we call two rational expressions {\em equivalent} (over $K$) if they belong to the same orbit.

Our main goal will be finding (some of) the equivalence classes (or $G(K)$-orbits)
on cubic rational expressions when $K$ is a finite field $\F_q$.
The following result shows, in particular, that the total number of cubic rational expressions over $\F_q$ equals $q^5(q^2-1)$.

\begin{lemma}\label{lemma:coprime_probability}
The number of distinct rational expressions of degree $r$ over $\F_q$ equals $q^{2r-1}(q^2-1)$.
\end{lemma}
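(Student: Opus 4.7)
The plan is to count \emph{coprime} pairs $(g,h)\in\F_q[x]^2$ with $\max(\deg g,\deg h)=r$, and then to divide out the common-scalar ambiguity. Each rational expression of degree $r\ge 1$ has a unique reduced form $g/h$ with $(g,h)$ coprime, determined up to $(g,h)\mapsto(\lambda g,\lambda h)$ for $\lambda\in\F_q^*$, and this $\F_q^*$-action on such pairs is free (no pair of max degree $\ge 1$ is fixed by a non-identity scalar). Hence the number sought equals $N_r/(q-1)$, where $N_r$ denotes the number of coprime pairs $(g,h)$ with $\max(\deg g,\deg h)=r$.

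First I would compute the total number $T_r$ of pairs $(g,h)$, dropping coprimality, with $\max(\deg g,\deg h)=r$: the pairs with both degrees $\le r$ number $q^{2(r+1)}$ and those with both degrees $\le r-1$ number $q^{2r}$, so $T_r=q^{2r}(q^2-1)$ for every $r\ge 0$ (using the convention $\deg 0=-\infty$). Next, I would use the unique decomposition $(g,h)=(d\,g_0,\,d\,h_0)$, where $d=\gcd(g,h)$ is taken monic and $(g_0,h_0)$ is coprime. Since $\deg d+\max(\deg g_0,\deg h_0)=\max(\deg g,\deg h)$, grouping by $\deg d=r-k$ and counting the $q^{r-k}$ monic polynomials of that degree yields
\[
T_r=\sum_{k=0}^{r}q^{r-k}\,N_k,
\]
so the analogous identity for $T_{r-1}$, multiplied by $q$ and subtracted, telescopes to $N_r=T_r-q\,T_{r-1}$ for $r\ge 1$. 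Substituting the closed form gives
\[
N_r=q^{2r}(q^2-1)-q\cdot q^{2(r-1)}(q^2-1)=q^{2r-1}(q-1)(q^2-1),
\]
and dividing by $q-1$ yields the asserted count $q^{2r-1}(q^2-1)$.

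The only real care is the handling of degree conventions (so that pairs with $g=0$ or $h=0$ are correctly accommodated both in the gcd decomposition and in the formula for $T_r$); once those are fixed, the argument is simply an instance of M\"obius-style inversion via the telescoping identity, with no genuine obstacle.
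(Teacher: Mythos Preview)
Your proof is correct and self-contained. The paper takes a different route: it normalizes to \emph{monic} coprime pairs (so that the number of rational expressions equals $q-1$ times the number of such pairs), then invokes a cited result of Benjamin and Bennett stating that two monic polynomials of fixed positive degrees are coprime with probability $1-1/q$, and finishes with a three-way case split according to which of $\deg g$, $\deg h$ attains $r$. You instead count all (not necessarily monic) coprime pairs via the gcd decomposition $T_r=\sum_{k}q^{r-k}N_k$ and telescope. The advantage of your approach is that it requires no external citation and derives the coprimality count from first principles; the paper's version is terser once the quoted probability is granted, but relies on that outside input. Both reach the same formula with comparable effort.
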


\begin{proof}
The desired number equals $q-1$ times the number of (ordered) pairs of coprime monic polynomials $g(x)$ and $h(x)$ in $\F_q[x]$ with $\max(\deg g,\deg h)=r$,
so we need to show that those pairs are in number of $q^{2r-1}(q+1)$.

As a special of~\cite[Corollary~5]{Benjamin-Bennett}, the probability that a pair of monic polynomials in $\F_q[x]$,
of given positive degrees $r$ and $s$, are coprime equals $1-1/q$, and so the number of such pairs equals $q^{r+s-1}(q-1)$.
By induction, the number of pairs of coprime monic polynomials in $\F_q[x]$, with the former having degree $r$ and the latter having degree strictly inferior to $r$,
equals $q^{2r-1}$.
Splitting the set of pairs of coprime monic polynomials which we are counting according as to whether $\deg g=\deg h=r$, or $\deg g<\deg h=r$,
or $\deg g=r>\deg h$, we find altogether
$q^{2r-1}(q-1)+q^{2r-1}+q^{2r-1}=q^{2r-1}(q+1)$ distinct pairs, as desired.
\end{proof}

Before studying the $G(K)$-orbits on $K(x)$ with $K$ a finite field it will be necessary to determine them when $K$ is an algebraically closed field.
%of arbitrary characteristic.
Characteristic two and three will require special treatment, due to possible inseparability but not just that.
A nonconstant rational expression $R(x)\in K(x)$ is called {\em separable} if the field extension
$K(x)/\bigl(K(R(x))\bigr)$ is separable, and {\em inseparable} otherwise.
%\nota{Marco: il rif.~ a Stichtenoth pare meglio}
We will only need a modicum of terminology from function field theory, pertaining to {\em ramification points} and the corresponding {\em ramification indices,}
and the following very special case of Hurwitz's Theorem (or the Riemann-Hurwitz formula),
which can be conveniently extracted from~\cite[Corollary~3.5.6]{Stichtenoth:book}
 %\cite[Chapter~2, Theorem~5.9]{Silverman:arithmetic}
or~\cite[Theorem~7.16]{Rosen:number_theory_book}.

\begin{theorem}[Special case of Hurwitz's Theorem]\label{thm:Hurwitz}
Let $K$ be an algebraically closed field, and let
$R(x)\in K(x)$ be a nonconstant separable rational expression. Then
\[
2\deg(R) - 2 \geq \sum_{P \in  \Proj^1(K)} (e_R(P) -1),
\]
where $e_R(P)$ is the ramification index of $R(x)$ at $P$.
Equality holds if and only if either the characteristic of $K$ is zero or it does not divide
$e_R(P)$ for any $P \in \Proj^1(K)$.
\end{theorem}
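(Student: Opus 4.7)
The plan is to derive this as a direct specialization of the general Riemann--Hurwitz formula to the morphism $\Proj^1(K)\to\Proj^1(K)$ determined by $R(x)$. The whole content of the proof will be a translation between the two languages.

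First I would invoke the general version: for a separable finite morphism $\phi\colon X\to Y$ between smooth projective curves over an algebraically closed field $K$ one has
\[
2g_X-2 \geq (2g_Y-2)\deg(\phi)+\sum_{P\in X}\bigl(e_\phi(P)-1\bigr),
\]
with equality if and only if $\phi$ is tamely ramified at every point, i.e.\ $\charac(K)\nmid e_\phi(P)$ for every $P\in X$ (a condition automatic in characteristic zero). This is obtained from the equality form of Riemann--Hurwitz by using that the local different exponent $d_P$ satisfies $d_P\geq e_\phi(P)-1$, with equality precisely in the tame case, and it is exactly the content of the two references cited in the statement.

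Next I would specialize to our situation. Because $g(x)$ and $h(x)$ are coprime, $R(x)$ defines a morphism $\Proj^1(K)\to\Proj^1(K)$ whose degree as a morphism equals $\max(\deg g,\deg h)=\deg(R)$ (this being the same as the function-field index $[K(x):K(R(x))]$), and the ramification indices $e_R(P)$ are those coming from the local rings on $\Proj^1(K)$. Substituting $g_X=g_Y=0$ and $\deg(\phi)=\deg(R)$ into the displayed inequality yields
\[
-2 \geq -2\deg(R)+\sum_{P\in\Proj^1(K)}\bigl(e_R(P)-1\bigr),
\]
which rearranges to the claimed bound, and the tameness criterion transcribes verbatim to the stated characteristic condition. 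I do not expect any real obstacle beyond this notational matching, which is precisely why the statement is labelled a \emph{special case}; the only things to verify are that "degree" and "ramification index" as used in Section~\ref{sec:equivalence} agree with those in the cited sources, both being entirely standard identifications on $\Proj^1$.
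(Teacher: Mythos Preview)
Your proposal is correct and matches the paper's own treatment: the paper does not supply a proof but simply states the result as a special case extracted from the cited references (Silverman and Rosen), which is exactly the specialization of the general Riemann--Hurwitz inequality to a morphism $\Proj^1\to\Proj^1$ with $g_X=g_Y=0$ that you outline. There is nothing further to compare.
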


In particular, according to Theorem~\ref{thm:Hurwitz} a separable cubic rational expression can have at most four ramification points $P$,
as the left-hand side of the inequality equals four in this case.
Of course this can also be seen directly from the fact that the finite ramification points of $R(x)=g(x)/h(x)$ are the zeroes of its derivative
$R'(x)=\bigl(g'(x)h(x)-g(x)h'(x)\bigr)/\bigl(h(x)\bigr)^2$, whose numerator has degree at most four
(and further considerations in case $\infty$ is a ramification point).

A classification of the $G(K)$-orbits on cubic rational expressions when $K$ is algebraically closed,
which we present in Section~\ref{sec:cubic}, will require using the action of $G(K)$ to suitably rearrange
the ramification points of an expression and their images, which are the corresponding {\em branch points}.
Note that distinct ramification points of a cubic expression necessarily map to distinct branch points.
A classification over a finite field $\F_q$, which we begin in Section~\ref{sec:cubic_finite},
will require studying the ramification points in its algebraic closure $\barF_q$ first.

We conclude this section by mentioning a natural field-theoretic interpretation of our results,
and how that relates to existing literature.
Classifying cubic rational expressions over a field $K$, up to the equivalence considered in this paper,
amounts to classifying the subfields $F$ of the field $K(x)$ of rational expressions, with $K\subseteq F$ and $|K(x)/F|=3$,
up to the action of $\Gal\bigl(K(x)/K\bigr)$.
This is because according to L\"uroth's theorem each such subfield has the form $F=K(z)$ for some cubic rational expression $z=R(x)$ in $K(x)$.
Also, $z$ is transcendental over $K$, and all elements of each of $\Gal\bigl(K(x)/K\bigr)$ and $\Gal\bigl(K(z)/K\bigr)$ are given by M\"obius transformations.

If we switch focus to the subfield $K(z)$, a rational function field, then here we are interested in certain cubic field extensions of that.
%Cubic extensions of function fields have been extensively studied and tabulated.
Cubic extensions $L/F$ of an arbitrary field $F$ of characteristic $p\ge 0$ were described in~\cite{Marques-Ward:primer},
extending classical results based on applications of Kummer theory and Artin-Schreier theory.
Any such extension has a generating polynomial of one of the forms $y^3-a$, $y^3-3y-a$ if $p\neq 3$, $y^3+ay+a^2$ if $p=3$,
for some $a\in F$.
Procedures were given in~\cite{Marques-Ward:primer} to decide when two such cubic field extensions are isomorphic.
%and special attention was devoted to the case where $F$ is a global function field.
 %% SHORTER VERSION:
 %If we switch focus to the subfield $K(z)$, a rational function field, then we are interested here in very special cubic field extensions of that.
 %Cubic extensions of function fields have been extensively studied and tabulated.
 %%according to parameters such as the discriminant degree.
 %Cubic extensions $L/F$ of an arbitrary field $F$ were classified up to isomorphism in~\cite{Marques-Ward:primer},
 %and special attention was devoted to the case where $F$ is a global function field.
 %%
The cases where $F$ is either a number field or a function field are of special interest,
and such field extensions have been extensively studied and tabulated over the years.

In particular, construction and tabulation of (geometric) dihedral extension of odd prime degree $\ell$ of a rational function field $F=K(z)$
was the primary goal of~\cite{Weir-Scheidler-Howe,Weir:thesis}, where $K$ is a perfect field of characteristic not dividing $2\ell$.
 %When $\ell=3$ a different and in other ways more general approach to classifying such extensions
 %was adopted in~\cite{Karemaker-Marques-Sijsling}, including characteristic $2$ but not $3$.
When $\ell=3$ a different approach was adopted in~\cite{Karemaker-Marques-Sijsling},
which focused on classifying such extensions with a given set of ramified places,
and including the case of characteristic two (but not three).

Besides a classification up to isomorphism of field extensions $L/F$, where $F=K(z)$ is a rational function field,
a classification up to {\em bi-isomorphism} was also obtained in~\cite{Karemaker-Marques-Sijsling},
which means allowing also automorphisms of $K(z)$ (hence M\"obius transformations over $K$).
When $L$ is also a rational function field, bi-isomorphism of function field extensions $L/F$ as defined in~\cite{Karemaker-Marques-Sijsling}
is thus closely related to equivalence of rational expressions as considered in this paper.
 %In particular, {\em bi-isomorphism} of function field extensions as defined in~\cite{Karemaker-Marques-Sijsling} relates to
 %equivalence of rational expressions considered in this paper, and we briefly sketch the connection here.
Consequently, some of our classification results, such as Theorem~\ref{thm:cubic_expr_finite},
match some of those obtained in~\cite[Section~5]{Karemaker-Marques-Sijsling}.
%(those where $L$ is taken to have genus zero).
However, in~\cite{Karemaker-Marques-Sijsling} those extensions were discussed in terms of a generating polynomial,
of the general form $y^3-a$ or $y^3-3y-a$ for certain $a\in F=K(z)$,
%of one of the general forms recalled above,
while our focus on cubic rational expressions $R(x)$ is rather different and justifies a separate treatment.
Furthermore, our treatment includes the case of characteristic three, and a complete analysis of the case of characteristic two,
in Section~\ref{sec:small_char}.
%where in the final section we provide a full classification of cubic rational expressions over finite fields of characteristic $2$.
Finally, and differently from~\cite{Karemaker-Marques-Sijsling},
our approach naturally leads to and includes counting results, both on the number of equivalence classes in Section~\ref{sec:bound},
and on cardinalities of individual equivalence classes in Section~\ref{sec:cubic_finite},

We are grateful to a referee of a previous version of this paper for calling our attention to some of the above references.

\section{Classification of quadratic expressions, revisited}\label{sec:quadratic}

Before we proceed towards a classification of cubic expressions over an algebraically closed field in the next section,
we review the much easier case of quadratic expressions.
Their equivalence classes can be determined by direct methods over an arbitrary field $K$,
as in~\cite[Theorem~2]{MatPiz:self-reciprocal} and the discussion which follows it,
but here we show how the case of $K$ algebraically closed can be resolved in a few lines
using geometric ideas such as ramification points and Hurwitz's Theorem.

Thus, let $R(x)$ be an arbitrary quadratic expression over an algebraically closed field $K$,
which we first assume not to have characteristic two.
Then according to Hurwitz's Theorem $R(x)$ has exactly two ramification points, each with index two.
By composing on both sides with suitable M\"obius transformations we may assume the
ramification points to be $\infty$ and $0$, and also that they coincide with the corresponding branch points.
In particular, $R(x)$ is equivalent to a quadratic polynomial (a quadratic
rational map without poles) and, actually, a scalar multiple of the map $x^2$ because
it has $0$ as a ramification point.
We conclude that over an algebraically closed field of characteristic different from two
every quadratic expression is equivalent to $x^2$.

Now assume the algebraically closed field $K$ has characteristic two.
Here it is possible for the quadratic expression $R(x)$ to be inseparable.
%which means that the field extension $K(x)/K\bigl(R(x)\bigr)$ is inseparable.
This occurs precisely when $R(x)$ is a square in $K(x)$,
and clearly all such expressions $R(x)$ are equivalent to $x^2$.
If $R(x)$ is separable then according to Hurwitz's Theorem it has precisely one ramification point.
Assuming both that and the corresponding branch point to be $\infty$, as we may,
$R(x)$ becomes a polynomial which is not a square, which can then be easily seen to be equivalent to $x^2+x$.
Thus, over an algebraically closed field of characteristic two
every quadratic expression is equivalent either to $x^2$ or to $x^2+x$.
Instead of $x^2+x$ one may take the equivalent expression
$
(1/x)\circ(x^2+x)\circ\bigl(x/(x+1)\bigr)=x+1/x,
$
which has $1$ as ramification point and $0$ as the corresponding branch point.

In the rest of this section we compute the lengths of the orbits of $G(\F_q)$ on the set of quadratic expressions over $\F_q$.
We start with the case of odd characteristic.

\begin{theorem}\label{thm:quadratic_finite_count}
Let the finite field $\F_q$ have odd characteristic, and fix a nonsquare $\sigma$ in $\F_q^{\ast}$.
Of all the $q^3(q^2-1)$ distinct quadratic rational expressions over $\F_q$, precisely
\begin{itemize}
\item[(i)]
$q^2(q^2-1)(q+1)/2$ are equivalent to $x^2$;
\item[(ii)]
$q^2(q^2-1)(q-1)/2$ are equivalent to $(x^2+\sigma)/(2x)$.
\end{itemize}
\end{theorem}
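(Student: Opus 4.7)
The plan is to apply the orbit-stabilizer theorem at each of the two listed representatives and then invoke Lemma~\ref{lemma:coprime_probability}. Since $|G(\F_q)|=|\PGL_2(\F_q)|^2=q^2(q^2-1)^2$, it suffices to show that the orbits of $x^2$ and $(x^2+\sigma)/(2x)$ have stabilizers of orders $2(q-1)$ and $2(q+1)$ respectively, and then to observe that the resulting orbit sizes sum to $q^3(q^2-1)$, which by the lemma is the total number of quadratic expressions over $\F_q$; together with disjointness of the two orbits, this shows they exhaust all quadratic expressions with no further argument needed. Disjointness is immediate because $G(\F_q)$ preserves the (unordered) ramification set: $x^2$ ramifies at $\{0,\infty\}\subset\Proj^1(\F_q)$, whereas $(x^2+\sigma)/(2x)$ ramifies at $\{\pm\sqrt{\sigma}\}$, a Galois-conjugate pair in $\Proj^1(\F_{q^2})\setminus\Proj^1(\F_q)$ since $\sigma$ is a nonsquare.

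For $R_1(x):=x^2$, any $(B,A)$ fixing $R_1$ has $A$ permuting $\{0,\infty\}$, so $A(x)=ax$ or $A(x)=a/x$ for some $a\in\F_q^{\ast}$ (giving $2(q-1)$ possibilities); the identity $B\circ R_1=R_1\circ A$ then uniquely determines $B(y)=a^2 y$ or $a^2/y$, respectively, both in $\PGL_2(\F_q)$. Hence the stabilizer has order $2(q-1)$, giving orbit size $q^2(q^2-1)^2/(2(q-1))=q^2(q^2-1)(q+1)/2$.

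For $R_2(x):=(x^2+\sigma)/(2x)$, the setwise stabilizer of $\{\pm\sqrt{\sigma}\}$ in $\PGL_2(\F_q)$ is a dihedral group of order $2(q+1)$: its pointwise part consists of the maps $x\mapsto(ax+c\sigma)/(cx+a)$ for $(a,c)\in\F_q^2\setminus\{(0,0)\}$ modulo $\F_q^{\ast}$-scaling ($q+1$ elements, since $a^2-c^2\sigma$ vanishes only at $(a,c)=(0,0)$ as $\sigma$ is a nonsquare), and there is a coset of equal size swapping the two points, represented for instance by $x\mapsto-x$. For each such $A$, expanding $R_2(A(x))$ and using the identity $x^2+\sigma=2xR_2(x)$ to reorganize the result in terms of $R_2(x)$ produces an explicit Möbius map $B$ with $\F_q$-coefficients such that $B\circ R_2=R_2\circ A$. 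Thus the stabilizer has order $2(q+1)$, giving orbit size $q^2(q^2-1)(q-1)/2$.

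The main obstacle is that final verification: the uniquely-determined $B$ must lie in $\PGL_2(\F_q)$ rather than merely in $\PGL_2(\F_{q^2})$, although the parametrization of $A$ itself already involves $\sigma$ (reflecting the fact that the ramification points of $R_2$ are not $\F_q$-rational). The explicit computation sketched above succeeds because $a$ and $c$ appear symmetrically enough that the $\sqrt{\sigma}$-parts of the numerator and denominator of $R_2(A(x))$ cancel in the final Möbius expression; as an alternative, a Galois-descent argument using the $\F_{q^2}$-Möbius transformation conjugating $\{0,\infty\}$ to $\{\pm\sqrt{\sigma}\}$ can compare the stabilizer of $R_2$ directly with the stabilizer of $x^2$ over $\barF_q$.
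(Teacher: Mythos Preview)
Your proof is correct and follows essentially the same route as the paper: compute the stabilizers of the two representatives via the orbit--stabilizer theorem, obtain orbit lengths $q^2(q^2-1)(q+1)/2$ and $q^2(q^2-1)(q-1)/2$, and conclude from their sum equalling the total count $q^3(q^2-1)$ that no further orbits exist. The paper likewise bypasses a direct proof that every quadratic expression with non-rational ramification points is equivalent to $(x^2+\sigma)/(2x)$, relying instead on this counting argument. The only difference is presentational: where you describe the stabilizer of $R_2$ structurally (as the dihedral setwise stabilizer of $\{\pm\sqrt{\sigma}\}$, with the corresponding $B$ then forced) and sketch why $B$ lands in $\PGL_2(\F_q)$, the paper writes out the explicit pairs $(B,A)$ and counts them directly.
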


The expression $(x^2+\sigma)/(2x)$ of case (ii) of Theorem~\ref{thm:quadratic_finite_count} is clearly equivalent to $(x^2+\sigma)/x=x+\sigma/x$,
but the former matches a more general result which we will prove later, namely, Theorem~\ref{thm:x^r_finite}.

\begin{proof}
According to the above discussion of the algebraically closed field case,
any quadratic expression $R(x)$ over $\F_q$ is equivalent to $x^2$ over $\barFq$.
If the ramification points of $R(x)$ belong to $\F_q$, and hence the corresponding branch points as well,
then the same argument outlined above for the algebraically closed case shows that $R(x)$ is actually equivalent to $x^2$ over $\F_q$.

Let $\tau$ be a square root of $\sigma$ in $\F_{q^2}$, and consider
\[
\tau\frac{x+\sigma}{x-\sigma}
\circ x^2\circ
\tau\frac{x+\tau}{x-\tau}
=
\frac{x^2+\sigma}{2x}.
\]
This quadratic expression is equivalent to $x^2$ over $\barFq$, and actually already over $\F_{q^2}$,
but certainly not over $\F_q$ as its ramification points $\tau$ and $-\tau$ do not belong to $\F_q$, with $\tau$ and $-\tau$ as corresponding branch points.

Now if the ramification points of $R(x)$ do not belong to $\F_q$, then the $q$th power Galois automorphism interchanges them,
and so it does to the corresponding branch points.
Suitable M\"obius transformations over $\F_{q^2}$ take those ramification points to $\tau$ and $-\tau$,
and similarly for the branch points.
We refer the reader to the proof of Theorem~\ref{thm:x^r_finite}, which deals with a more general situation,
 %\nota{check}
for an argument showing that those M\"obius transformations can be taken to be defined over $\F_q$,
implying that $R(x)$ is equivalent to $(x^2+\sigma)/(2x)$ as desired.
In the present proof we bypass that fact by relying on the orbit length count below.

\quad(i)\quad
Each element of the stabilizer of $x^2$, that is, a pair $(B,A)\in G(\F_q)$ such that $B(x)\circ x^2\circ A^{-1}(x)=x^2$,
must either fix or interchange the two ramification points of $x^2$, and similarly for the branch points.
This fact and a simple calculation show that the stabilizer consists of the pairs $(B,A)=(a^2x,ax)$
and $(a^2/x,a/x)$, with $a\in\F_q^\ast$.
Consequently, the stabilizer has order $2(q-1)$, whence the orbit of $x^2$ has length
$|G(\F_q)|/\bigl(2(q-1)\bigr)=q^2(q^2-1)(q+1)/2$.

\quad(ii)\quad
Consider now the quadratic expression $x\mapsto (x^2+\sigma)/(2x)$.
As in case (i) each element $(B,A)\in G(\F_q)$ of the stabilizer must either fix or interchange the two ramification points $\tau$ and $-\tau$,
and also the branch points.
The M\"obius transformations which fix both $\tau$ and $-\tau$ are
$(ax+b\sigma)/(bx+a)$,
and those who interchange them are
$(ax-b\sigma)/(bx-a)$,
for $a,b\in\F_q$, not both zero.
One then easily finds that the stabilizer consists of all pairs
\[
(B,A)=
\biggl(
\frac{(a^2+\sigma b^2)y+2\sigma a b}{2abx+(a^2+\sigma b^2)},
\frac{ax+\sigma b}{bx+a}
\biggr)
\]
and of all pairs
\[
(B,A)=
\biggl(
\frac{(a^2+\sigma b^2)y-2\sigma ab}{2aby-(a^2+\sigma b^2)},
\frac{ax-\sigma b}{bx-a}
\biggr)
\]
for $a,b \in\F_q$, not both zero.
Because proportional pairs $a,b$ yield the same stabilizer element,
the stabilizer has order $2(q^2-1)/(q-1) = 2(q+1)$.
Hence the orbit has length
$|G(\F_q)|/\bigl(2(q+1)\bigr)=q^2(q^2-1)(q-1)/2$.

Because the lengths of the two orbits which we have described add up to $q^3(q^2-1)$
we conclude that there are no further orbits.
\end{proof}

According to~\cite[Theorem~2]{MatPiz:self-reciprocal} and the discussion which follows it,
every quadratic expression over an arbitrary field $K$ of characteristic different from two is equivalent to $x+\sigma/x$,
with $\sigma\in K^{\ast}$ uniquely determined up to squares.

Now we consider the case of finite fields of characteristic two.

\begin{theorem}\label{thm:quadratic_finite_count_2}
Let $\F_q$ be a finite field of characteristic two.
Of all the $q^3(q^2-1)$ distinct quadratic rational expressions over $\F_q$, precisely
\begin{itemize}
\item[(i)]
$q(q^2-1)$ are equivalent to $x^2$;
\item[(ii)]
$q(q^2-1)^2$ are equivalent to $(x^2+1)/x$.
\end{itemize}
\end{theorem}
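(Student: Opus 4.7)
The plan is to mirror the proof of Theorem~\ref{thm:quadratic_finite_count}, using the algebraically closed classification recalled earlier in this section (every quadratic expression over $\barFq$ is equivalent either to $x^2$ or to $x+1/x$) as the starting point. First I would verify that these two forms represent distinct $G(\F_q)$-orbits which exhaust all quadratic expressions. The inseparable quadratics over $\F_q$ are precisely those of the form $\tilde R(x^2)$ with $\tilde R$ a M\"obius transformation over $\F_q$ (since in characteristic two, $R'=0$ combined with coprimeness of numerator and denominator forces both to be polynomials in $x^2$), and so they all lie in the $G(\F_q)$-orbit of $x^2$. For a separable $R$, the unique ramification point in $\barFq$ is Galois-stable, hence lies in $\F_q$, and so does the corresponding branch point; following the algebraically closed reduction step by step while checking that each intermediate M\"obius transformation can be taken in $\PGL_2(\F_q)$ shows $R$ is $\F_q$-equivalent to $(x^2+1)/x$. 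Alternatively, as in the odd-characteristic proof, one may bypass this descent by relying on the orbit-length count at the end.

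To compute the stabilizer of $x^2$, I use that in characteristic two squaring is a ring endomorphism: for $A(x)=(ax+b)/(cx+d)$ one has $A(x)^2=(a^2x^2+b^2)/(c^2x^2+d^2)$, so the stabilizer condition $B\circ x^2=x^2\circ A$ forces $B=A^{(2)}$, where $A^{(2)}$ denotes the M\"obius transformation obtained by applying Frobenius to the coefficients of $A$. Since $A\mapsto A^{(2)}$ is a bijection on $\PGL_2(\F_q)$, the stabilizer of $x^2$ is exactly $\{(A^{(2)},A):A\in\PGL_2(\F_q)\}$, of order $|\PGL_2(\F_q)|=q(q^2-1)$. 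Consequently the orbit of $x^2$ has length $|G(\F_q)|/\bigl(q(q^2-1)\bigr)=q(q^2-1)$, proving~(i).

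For the stabilizer of $R(x)=(x^2+1)/x$ I adopt the Galois-theoretic viewpoint: $R$ is a Galois cover of degree two with nontrivial deck transformation $\sigma(x)=1/x$, so $(B,A)\in G(\F_q)$ stabilizes $R$ exactly when $A$ normalizes $\langle\sigma\rangle$ in $\PGL_2(\F_q)$, with $B$ then determined by the descended action. In characteristic two $\sigma$ is a unipotent element with unique fixed point $1$, and an $\F_q$-conjugation (for instance by $x\mapsto 1/(x+1)$) turns $\langle\sigma\rangle$ into $\langle x\mapsto x+1\rangle$. Any element normalizing the latter must fix its unique common fixed point $\infty$, hence is affine, and a direct check shows that conjugating $x+1$ by $ax+b$ yields $x+a$, forcing $a=1$. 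Thus the normalizer is the translation subgroup $\{x+b:b\in\F_q\}$, of order $q$, so the stabilizer of $R$ has order $q$ and its orbit has length $q^2(q^2-1)^2/q=q(q^2-1)^2$. Summing the two orbit lengths yields $q(q^2-1)+q(q^2-1)^2=q^3(q^2-1)$, which matches the total count of quadratic expressions from Lemma~\ref{lemma:coprime_probability}, so no further orbits can exist. The main technical point is the descent from $\barFq$- to $\F_q$-equivalence in the separable case, but the orbit-count check provides a clean bypass just as in the odd-characteristic proof.
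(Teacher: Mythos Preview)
Your proof is correct and follows the same overall architecture as the paper's: establish the two candidate representatives, compute each stabilizer in $G(\F_q)$, and conclude by checking that the orbit lengths sum to $q^3(q^2-1)$. Part~(i) is identical to the paper's argument (the paper writes out $B=(a^2x+b^2)/(c^2x+d^2)$ explicitly, which is exactly your $A^{(2)}$).

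The one genuine difference is in part~(ii). The paper replaces $(x^2+1)/x$ by its equivalent $x^2+x$, observes that any $(B,A)$ in the stabilizer must have $A$ fixing the unique ramification point $\infty$ and $B$ fixing the unique branch point $\infty$, so both are affine, and then a one-line computation gives the stabilizer $\{(x+b+b^2,\,x+b):b\in\F_q\}$. You instead stay with $(x^2+1)/x$, identify the stabilizer with the normalizer of the deck transformation $\sigma(x)=1/x$, and compute that normalizer by conjugating $\sigma$ to the translation $x\mapsto x+1$. Both routes yield a stabilizer of order $q$; the paper's is shorter and more elementary, while yours makes the Galois-cover structure explicit and would generalize more readily to higher-degree Galois covers. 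Either way the final count and the exhaustion argument are the same.
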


\begin{proof}
We know from the algebraically closed field case that
any quadratic expression $R(x)$ over $\F_q$ is equivalent to $x^2$ or $x+1/x$ over $\barFq$,
according to whether it is inseparable or separable.
One could now argue directly why
this equivalence actually holds over $\F_q$ in either case (as in the proof of Theorem~\ref{thm:x^r_finite}),
but we can dispense with that as the desired conclusion will follow from a calculation of the orbit lengths under $G(\F_q)$.

\quad(i)\quad
In this case the stabilizer of the quadratic expression $x^2$ is larger than in the odd characteristic case,
and is easily found to consist of all pairs
\[
(B,A)=
\biggl(
\frac{a^2x+b^2}{c^2x+d^2},
\frac{ax+b}{cx+d}
\biggr),
\]
with $a,b,c,d\in\F_q$ and $ad-bc\neq 0$.
Hence this stabilizer is isomorphic with $\PGL_2(\F_q)$, and so it has order $q(q^2-1)$,
and the orbit has length
$|G(\F_q)|/\bigl(q(q^2-1)\bigr)=q(q^2-1)$.

\quad(i)\quad
Instead of working with the quadratic expression $(x^2+1)/x$ it is convenient to consider the equivalent expression $x^2+x$.
For every element $(B,A)\in G(\F_q)$ in the stabilizer of the latter, $A$ must fix the only ramification point $\infty$,
and $B$ must fix the only branch point $\infty$.
Consequently, each of $A$ and $B$ is a degree one binomial, and then a simple calculation shows
that the stabilizer of $x^2+x$ consists of all pairs $(A,B)=(x+b+b^2,x+b)$.
with $b\in\F_q$.
Therefore, the stabilizer of $x^2+x$ is isomorphic with the additive group of $\F_q$.
Because the stabilizer has order $q$, the orbit has length $|G(\F_q)|/q=q(q^2-1)^2$.

As the lengths of the two orbits above add up to $q^3(q^2-1)$ we conclude that there are no further orbits.
\end{proof}

Note that, differently from the case of odd $q$, only a minority of the quadratic rational expressions
over a large finite field $\F_q$ of characteristic two are equivalent to $x^2$.
Thus, a `random' quadratic rational expressions over a large finite field $\F_q$ of characteristic two
is very likely to be equivalent to $x+1/x$, while that occurs with a probability approaching $1/2$ in the odd $q$ case.

According to~\cite[Theorem~2]{MatPiz:self-reciprocal} and the discussion which follows it,
every quadratic expression over an arbitrary field $K$ of characteristic two is equivalent either to $x^2$,
or to $x+\sigma/x$, with $\sigma\in K^{\ast}$ uniquely determined up to squares.

\section{Cubic expressions over an algebraically closed field}\label{sec:cubic}

In this section we find representatives for the equivalence classes, or $G(K)$-orbits,
of cubic rational expressions $R(x)$ over an algebraically closed field $K$.
Because a cubic expression has up to four ramification points, the description will involve
the configurations of quadruple of points of the projective line $\Proj^1(K)$, which are characterized by their cross-ratio.

Associated to an ordered quadruple of points of $\Proj^1(K)$ is their cross-ratio
\[
\lambda=(x_1,x_2;x_3,x_4):=\frac{(x_1-x_3)(x_2-x_4)}{(x_2-x_3)(x_1-x_4)}
\]
which is computed with respect to any affine coordinate $x$ on $\Proj^1(K)$.
The cross-ratio is an invariant of {\em ordered} quadruples of the projective line, in the sense that it
is left unchanged by all automorphisms of $\Proj^1(K)$, that is, by all M\"obius transformations.
The cross-ratio is also unaffected by simultaneously interchanging its entries in disjoint pairs.
More generally, when the four points are permuted, their cross-ratio changes within its orbit under the action of the
group $S$ of order six generated by the involutions $\lambda\mapsto 1/\lambda$ and $\lambda\mapsto 1-\lambda$.
The remaining elements of $S$, besides the identity, are the involution $\lambda\mapsto\lambda/(\lambda-1)$,
the substitution
$\lambda\mapsto 1/(1-\lambda)$ of order three, and its inverse $\lambda\mapsto (\lambda-1)/\lambda$.
Thus, as an invariant of {\em unordered} quadruples of the projective line, the cross-ratio takes its values
in the set of $S$-orbits on $K\cup\{\infty\}$.

Note that the orbits of $S$ on $K\cup\{\infty\}$ have length six,
with the only exceptions of the orbits $\{\infty,0,1\}$ and $\{1/2,2,-1\}$ of length three (which coincide if
$K$ has characteristic two, and the latter orbit has just one element if $K$ has characteristic three), and possibly
one further orbit consisting of the two roots of $\lambda^2-\lambda+1$ in $K$.

For $K$ of characteristic different from two and three we will also encounter the involution $\lambda\mapsto(\lambda-2)/(2\lambda-1)$.
Because that commutes with $\lambda\mapsto 1/\lambda$
and with $\lambda\mapsto 1-\lambda$, it commutes with each element of $S$.
In fact, together with the identity it forms the centralizer of $S$ in the M\"obius group.
Consequently, the involution $\lambda\mapsto(\lambda-2)/(2\lambda-1)$ permutes the $S$-orbits on $K\cup\{\infty\}$.
One can verify that the only $S$-orbits fixed by it are the set of roots of $\lambda^2-\lambda+1$ (if contained in $K$),
and the set of roots of
$(\lambda^2-4\lambda+1)(\lambda^2+2\lambda-2)(2\lambda^2-2\lambda-1)$
(if contained in $K$).

We are now ready to state and prove a classification of cubic rational maps up to equivalence over an algebraically closed field,
postponing the cases of characteristic two and three to later consideration.

\begin{theorem}\label{thm:cubic_expr_alg_closed}
Let $K$ be an algebraically closed field of characteristic different from two and three.
Any cubic rational expression $R(x)$ over $K$ is equivalent either to $x^3$, or to
\[
R_c(x):=
\frac{x^3+(c-2)x^2}{(2c-1)x-c},
\]
for some $c\in K\setminus\{0,1\}$.

The cubic expressions $R_c(x)$ and $R_{c'}(x)$, for $c,c'\in K\setminus\{0,1\}$, are equivalent if and only if $c$ and $c'$ belong to the same $S$-orbit.

The cubic expression $R_c(x)$ has ramification points
$\infty$, $0$, $1$, $\lambda$, with corresponding branch
points $\infty$, $0$, $1$, $\mu$, where
$\lambda=c(2-c)/(2c-1)$ and
$\mu=c(2-c)^3/(2c-1)^3$.
The four ramification points are distinct,
and so are the four branch points,
except when $c\in\{1/2,2,-1\}$.
Those three exceptional cases give expressions equivalent to
$R_{1/2}(x)=-2x^3+3x^2$,
whose ramification points are $\infty,0,1$, with $\infty$ of index three.
\end{theorem}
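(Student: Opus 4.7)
My plan is to use Hurwitz's Theorem to constrain the ramification structure and the sharp $3$-transitivity of $\PGL_2(K)$ on both factors of $G(K)$ to reduce each cubic to a normal form. Since $\charac K\neq 2,3$ and $e_R(P)\leq 3$, Theorem~\ref{thm:Hurwitz} gives $\sum_{P}(e_R(P)-1)=4$ with equality. The three possible ramification profiles are then: (A) two points, each of index $3$; (B) one point of index $3$ and two of index $2$; (C) four points, each of index $2$.

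In case (A), I would use $(B,A)\in G(K)$ to move the two ramification points and their branch points to $\infty$ and $0$ in matching order; the resulting cubic has a triple zero at $0$ and triple pole at $\infty$, hence equals $ax^3$ for some $a\in K^\ast$, equivalent to $x^3$. In cases (B) and (C), I would use $3$-transitivity to assume three of the ramification points are $\infty,0,1$ with branch points $\infty,0,1$, placing the index-$3$ point at $\infty$ in case (B). Writing the normalized $R=g/h$ with $g,h$ coprime: ramification of index $\geq 2$ at $\infty$ forces $\deg h\leq 1$; ramification at $0$ with $R(0)=0$ forces $g(x)=x^2(x+a)$; and ramification at $1$ with $R(1)=1$ determines $h(x)=(3+2a)x-(2+a)$, or forces $a=-3/2$ when $\deg h=0$. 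The substitution $c=a+2$ identifies $R$ with $R_c$, case (B) corresponding to $c=1/2$.

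The explicit ramification and branch data of $R_c$ follow from a direct computation: the numerator of $R_c'$ factors as $2x(x-1)\bigl((2c-1)x+c(c-2)\bigr)$, giving finite ramification points $0$, $1$, and $\lambda=-c(c-2)/(2c-1)$ for $c\neq 1/2$; a local analysis at $\infty$ shows the index there equals $3-\deg h$, making $\infty$ the fourth ramification point. Evaluating $R_c(\lambda)$ yields $\mu=-c(c-2)^3/(2c-1)^3$. The four ramification points (equivalently, the four branch points) fail to be distinct precisely when $c\in\{1/2,2,-1\}$; each such value falls into case (B), whose normal form $R_{1/2}=-2x^3+3x^2$ then absorbs all three exceptional expressions.

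For the equivalence criterion $R_c\sim R_{c'}$ in case (C), the forward direction is obtained by constructing explicit $(B,A)\in G(K)$ realizing each generator of the $S$-action on $c$: swapping a pair of ramification points among $\{\infty,0,1,\lambda\}$ together with the corresponding pair of branch points sends $R_c$ to an expression of the form $R_{c'}$, and a short calculation identifies the effect on $c$ as either $c\mapsto 1/c$ or $c\mapsto 1-c$. The converse requires that the permutation $\sigma\in S_4$ induced by an equivalence on the four ramification-branch labels simultaneously match both the ramification cross-ratio $[\infty,0,1,\lambda]=\lambda$ and the branch cross-ratio $[\infty,0,1,\mu]=\mu$ between $R_c$ and $R_{c'}$. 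This joint compatibility is the main obstacle: the map $c\mapsto\lambda$ is $2$-to-$1$, so matching $\lambda$ alone is insufficient, and two $c$-values producing the same $\lambda$ but different $\mu$ fall into different $S$-orbits. The proof must therefore combine both cross-ratios through the single permutation $\sigma$, and verifying that this combined invariant cuts out exactly the $S$-orbit of $c$ is the most technical step.
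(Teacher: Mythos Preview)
Your proposal is correct and follows essentially the same route as the paper: Hurwitz's formula yields the three ramification profiles, sharp $3$-transitivity normalizes three ramification and branch points to $\infty,0,1$ to produce the family $R_c$, and the equivalence criterion rests on the $S$-equivariance of the $2$-to-$1$ map $c\mapsto\lambda$ together with the fact that $\mu=\lambda^3/c^2$ separates the two preimages of each $\lambda$. Your explicit derivation of $h(x)$ and the factorization of the numerator of $R_c'$ are more detailed than the paper's ``short calculation'', but the structure of the argument is the same.
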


\begin{proof}
According to Hurwitz's formula quoted in Theorem~\ref{thm:Hurwitz}, the cubic expression
$R(x)$ has at most four ramification points $P_i$, with ramification indices
$(3,3)$, $(3,2,2)$, or $(2,2,2,2)$,
and corresponding distinct branch points $R(P_i)$.

If $R(x)$ has only two ramification points, say $P_1$ and $P_2$, then
after pre- and post-composition with suitable automorphisms of $\Proj^1(K)$
we may assume those to be $0$ and $\infty$, and the corresponding branch points to be $0$ and $\infty$.
More precisely, if $A(x)$ is any M\"obius transformation over $\barFq$
which maps $0$ to $P_1$, and $\infty$ to $P_2$,
then $(R\circ A)(x)$ has ramification points $0$ and $\infty$.
Similarly, if $B(x)$ is any M\"obius transformation over $\barFq$
which maps the corresponding branch points $R(P_1)$ and $R(P_2)$
to $0$ and $\infty$, respectively, then $(B\circ R\circ A)(x)$ has ramification points $0$ and $\infty$, each with index three,
and with corresponding branch points $0$ and $\infty$.
Clearly $(B\circ R\circ A)(x)$ is then a scalar multiple of $x^3$, and so it is equivalent to $x^3$.

Now suppose that $R(x)$ has at least three  ramification points.
Assuming, as we may, that the ramification points include $\infty$ and $0$, with corresponding branch points $\infty$ and $0$
(hence it has $\infty$ as at least a double pole and $0$ as at least a double zero),
$R(x)$ will have the form
$R(x)=(x^3+ax^2)/(bx-c)$,
for some $a,b,c\in K$.
Further imposing, as we may, that $R(x)$ has $1$ as a ramification point with
corresponding branch point $1$, amounts to $R(x-1)-1$
having a double zero at $0$.
A short calculation then leads to
\[
R(x)=R_c(x):=
\frac{x^3+(c-2)x^2}{(2c-1)x-c},
\]
with $c\neq 0,1$ (otherwise $R_c(x)$ would be quadratic rather than cubic).
Because the ramification points besides $\infty$ are the zeroes of the derivative
$R'(x)$, one computes that the fourth ramification point (which may coincide with one of $\infty,0,1$) is
$\lambda=c(2-c)/(2c-1)$, with corresponding branch point
$\mu=c(2-c)^3/(2c-1)^3$.
A similar discussion of an equivalent family is given in~\cite[Example~8.2]{Osserman:given_ramification}.
%See also~\cite[Example~8.2]{Osserman:given_ramification} for a similar discussion of an equivalent family.

Now we examine when $R(x)$ has precisely three ramification points.
We see that $\lambda=\infty,0,1$ when
$c=1/2,2,-1$, respectively.
Because $\mu$ equals $\lambda$ for each of those values of $c$, those three choices of $c$ yield equivalent rational expressions $R_c(X)$,
as one can change from one another by suitably permuting the three ramification points,
and the branch points correspondingly.

It remains to determine when $R_c(x)$ and $R_{c'}(x)$ are equivalent
for $c,c'\in K\setminus\{0,1,1/2,2,-1\}$.
A necessary condition is that the corresponding cross-ratios $\lambda$ and $\lambda'$ belong to the same $S$-orbit,
but we will see that this alone is not sufficient.
Because the substitutions $c\mapsto 1/c$ and $c\mapsto 1-c$ induce
$\lambda\mapsto 1/\lambda$ and $\lambda\mapsto 1-\lambda$, respectively,
any substitution on $c$ in the group $S$ induces the analogous substitution on the corresponding cross-ratio $\lambda$.
In other words, the quadratic map sending $c$ to $\lambda=c(2-c)/(2c-1)$ is $S$-equivariant.
In particular, it maps $S$-orbits of $c$ to $S$-orbits of $\lambda$.
Each value of $\lambda\in K\setminus\{0,1\}$ originates from exactly two choices of $c\in K\setminus\{0,1,1/2,2,-1\}$, except when $c^2-c+1=0$,
which is equivalent to $\lambda^2-\lambda+1=0$.
The involution $c\mapsto (c-2)/(2c-1)$ that we introduced before Theorem~\ref{thm:cubic_expr_alg_closed}
interchanges those two choices of $c$ for a given $\lambda$.
However, those two choices give inequivalent expressions $R_c(x)$ because they yield different values of
$\mu=\lambda^3/c^2$.
(Here we are not allowed to permute the branch points for computing their cross-ratio $\mu$,
having already chosen an ordering for the respective ramification points to compute $\lambda$.)
We conclude that $R_c(x)$ and $R_{c'}(x)$ are equivalent precisely when $c$ and $c'$ belong to the same $S$-orbit.
\end{proof}

\begin{rem}\label{rem:cubic_expr_alg_closed}
A variant of the parametric family $R_c(x)$ of Theorem~\ref{thm:cubic_expr_alg_closed} is
\[
(-4x+2)\circ R_c(x)\circ\frac{x+1}{2}=
\frac{x^3-bx^2-3x-b}{bx+1},
\]
where $b=1-2c$.
This has ramification points $\infty$, $-1$, $1$, $(b^2-3)/2b$,
with corresponding branch points $\infty$, $2$, $-2$, $-(b^4+18b^2-27)/4b^3$.
For $c=1/2$ this expression reads $x^3-3x$, which will be a more convenient choice than $R_{1/2}(x)=-2x^3+3x^2$
when passing to a finite ground field in Theorem~\ref{thm:cubic_expr_finite}.
Note that $x^3-3x=D_3(x,1)$ is a Dickson polynomial, where
$D_k(x+\alpha/x,\alpha)=x^k+\alpha/x^k$.
\end{rem}

We now consider the cases of characteristics three and two.
Besides the possibility that $R(x)$ is inseparable, we must take into account that
Hurwitz's Theorem gives a strict inequality, rather than an equality, in case some ramification index is a multiple of the characteristic.

\begin{theorem}\label{thm:cubic_expr_alg_closed_3}
Let $K$ be an algebraically closed field of characteristic three.
Any cubic rational expression $R(x)$ over $K$ is equivalent to either $x^3$, or $x^3+x^2$, or $x^3+x$, or
\[
R_c(x):=
\frac{x^3+(c+1)x^2}{-(c+1)x-c},
\]
for some $c\in K\setminus\F_3$.
\end{theorem}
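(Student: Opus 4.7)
The plan is to mirror the strategy of Theorem~\ref{thm:cubic_expr_alg_closed}, using the $G(K)$-action to pin down the positions of the ramification and branch points of $R(x)$, while accommodating the two features introduced by characteristic three: the possible inseparability of $R$, and the strict inequality in Hurwitz's Theorem whenever a ramification index equals three.

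First I would dispose of the inseparable case. A cubic rational expression $R(x)$ over $K$ is inseparable precisely when $R(x)\in K(x^3)$, and since $\deg R=3$ this forces $R(x)$ to be a linear rational function of $x^3$; post-composing with its inverse yields $x^3$. For the rest of the proof I would assume $R(x)$ is separable.

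Next I would analyse the possible ramification patterns via Hurwitz: because each index is $2$ or $3$ and the bound $\sum_P(e_R(P)-1)\le 4$ becomes strict whenever some index is three, the surviving patterns are a single point of index three (sum $2$), one point of index three and one of index two (sum $3$), and four points of index two (sum $4$, with equality since no index is three). The other combinations are ruled out by the same inequality. For each surviving pattern I would normalise by $G(K)$ as follows. If there is one index-three point, I place it and its branch point at $\infty$; then $R$ is a cubic polynomial whose derivative in characteristic three reduces to $-bx+d$, which has no zero only when $b=0$ and $d\neq 0$, and this yields $x^3+x$ after rescaling and translating. If there is one index-three point and one index-two point, I place them at $\infty$ and $0$ with matching branch points, so that $R(x)=ax^3+bx^2$ with $b\neq 0$, which is equivalent to $x^3+x^2$.

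The main work, and the anticipated obstacle, is the four-point case. Following the template of the characteristic-zero proof I would place three of the ramification points and their branch points at $\infty$, $0$, $1$, forcing $R(x)$ to have the shape $(x^3+ax^2)/(bx-c)$. Imposing $R(1)=1$ and $R'(1)=0$, carried out with the characteristic-three simplifications $3=0$ and $2=-1$, should reduce to two solution branches: one that makes $R$ degenerate into a quadratic (to be discarded), and the relation $b+c=-1$ together with $a=c+1$, which yields the family $R_c(x)=(x^3+(c+1)x^2)/(-(c+1)x-c)$. I would then verify directly that $R_c(x)$ has degree exactly three precisely when $c\in K\setminus\F_3$: the values $c=0$ and $c=1$ cause the denominator to divide the numerator and drop the degree to two, while $c=-1$ collapses $R_c$ to $x^3$, which is the already-handled inseparable case. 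The bookkeeping for these coincidences, more than the derivative calculation itself, is what I expect to need the most care.
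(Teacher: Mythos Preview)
Your proposal is correct and follows essentially the same strategy as the paper. The only organizational difference is that the paper treats the presence of an index-$3$ ramification point as a single case: it normalises $R$ to a monic cubic polynomial without constant term and then reads off the three subcases $x^3$, $x^3+x^2$, $x^3+x$ by direct inspection (handling the inseparable expression as one of these rather than separately). You instead first dispose of inseparability and then use the strict Hurwitz inequality to enumerate the separable patterns $(3)$, $(3,2)$, $(2,2,2,2)$ before normalising each; this is equally valid and arguably cleaner, though it leans on the wild-ramification clause of Hurwitz where the paper simply computes. Your anticipated ``two solution branches'' in the four-point case is exactly right (the spurious branch is $a=-1$, giving $x^2/c$), and your check that $c\in\F_3$ degenerates $R_c$ is accurate; the paper glosses over both of these by referring back to the characteristic-zero proof.
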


The parametric expression for $R_c(x)$ equals that of Theorem~\ref{thm:cubic_expr_alg_closed} viewed modulo three.
Also, for the excluded value $c=-1$, the formula for $R_c(x)$ returns the inseparable cubic expression $R_{-1}(x)=x^3$ (which is ramified everywhere).
The remaining rational expressions given have, respectively, $(3,2)$, $(3)$, and $(2,2,2,2)$ for the parametric family,
as lists of ramification indices.
These exhaust all the possibilities allowed by Hurwitz's Theorem in characteristic three.

\begin{proof}
We resort to direct calculation to cover cases where Hurwitz's formula does not provide an equality.
Thus, suppose that $R(x)$ has a ramification point with index $3$, which we can assume to be $\infty$, with image $\infty$.
Hence $R(x)$ is a cubic polynomial, which is then equivalent to a monic one without constant term.
In turn, $R(x)$ is easily seen to be equivalent to either $x^3$ (which is inseparable), or $x^3+x^2+bx$, or $x^3+bx$, for some $b\in K$.
In the second case $R(x)$ is equivalent to
\[
(x-b^3-2b^2)\circ(x^3+x^2+bx)\circ(x-b/2)
=x^3+x^2,
\]
which has $0$ as a further ramification point, with index $2$.
In the third case $R(x)$ is equivalent to
\[
(x/\sqrt{b}\,^3)\circ(x^3+bx)\circ(\sqrt{b}\,x)
=x^3+x
\]
(with $\sqrt{b}$ denoting a fixed square roots of $b$ in $K$),
whose only ramification point is $\infty$.

Now we may assume that all ramification points of $R(x)$ have indices less than $3$, whence equality holds in Hurwitz's formula.
Consequently, $R(x)$ has four ramification points, each with index $2$.
The corresponding part of the proof of Theorem~\ref{thm:cubic_expr_alg_closed}
applies and yields the desired conclusion.
\end{proof}

\begin{rem}\label{rem:cubic_expr_alg_closed_3}
As in Remark~\ref{rem:cubic_expr_alg_closed}, a variant of the parametric family $R_c(x)$ of Theorem~\ref{thm:cubic_expr_alg_closed_3} is
$(x^3-bx^2-b)/(bx+1)$,
where $b=c+1\in K\setminus\F_3$.
This has ramification points $\infty$, $-1$, $1$, $-b$,
with corresponding branch points $\infty$, $-1$, $1$, $-b$.
Hence with this choice the ramification points coincide with the four fixed points of the cubic expression.
\end{rem}

We conclude this section by considering the case where the field $K$ has characteristic two.
Then Theorem~\ref{thm:Hurwitz} implies that $R(x)=g(x)/h(x)$ can only have at most two ramification points.
An elementary way of seeing this is noting that,
because the second derivative of every rational expression in characteristic two vanishes,
the derivative
$R'(x)=\bigl(g'(x)h(x)-g(x)h'(x)\bigr)/h(x)^2$
must be a rational expression in $x^2$,
and hence its numerator, which is a polynomial of degree at most four,
can only have at most two distinct zeroes.

\begin{theorem}\label{thm:cubic_expr_alg_closed_2}
Let $K$ be an algebraically closed field of characteristic two.
Any cubic rational expression $R(x)$ over $K$ is equivalent to either
$x^3$, or
$x^3+x^2$, or
$(x^3+1)/x$, or
$(x^3+cx^2)/(x+1)$ for a unique $c\in K\setminus{\F_2}$.
\end{theorem}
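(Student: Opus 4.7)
The plan is to use Hurwitz's Theorem together with the direct observation, recalled just before the theorem statement, that in characteristic two a cubic rational expression $R(x)$ has at most two ramification points. This bounds the possible ramification structures, which I then normalize using the action of $G(K)$.

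Since $\deg R = 3$ is coprime to the characteristic, $R$ is automatically separable, and each of its ramification indices lies in $\{2,3\}$. Theorem~\ref{thm:Hurwitz} gives $\sum_P(e_R(P)-1) \leq 4$, with equality exactly when no ramification index is divisible by the characteristic. This immediately rules out the configuration of a single ramification point of index $3$, which would give $\sum = 2$ with no wild contribution and so violate equality in the tame case. Combined with the bound of at most two ramification points, the only possibilities left are the ramification multisets $(3,3)$, $(3,2)$, $(2,2)$, and $(2)$, corresponding respectively to the four cases of the theorem.

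For each configuration I would place the ramification points and their paired branch points at $0$ and $\infty$ by a suitable $(B,A)\in G(K)$. The cases $(3,3)$, $(3,2)$ and $(2)$ are straightforward: after normalization $R$ becomes either a cubic polynomial or a rational function of the form $g(x)/x$ with $\deg g = 3$. Coefficient-matching and a final scaling reduce the first two to $x^3$ and $x^3+x^2$. In the $(2)$ case, $R(x) = ax^2 + bx + c + d/x$ with $a,d\in K^\ast$; the identity $R'(x) = b - d/x^2$ in characteristic two and the absence of further finite ramification force $b=0$, and a scaling on source and target (using a cube root in $K$) reduces $R$ to $(x^3+1)/x$.

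The main case is $(2,2)$, which yields the one-parameter family. Normalizing ramification and branch points to $\{0,\infty\}$ with the pairing $0\leftrightarrow 0$ and $\infty\leftrightarrow\infty$ forces $R(x)=(x^3+ax^2)/(cx+1)$ with $a,c\in K^\ast$ and $ac\ne 1$ (the last condition ensures that $R$ does not collapse to $x^2/c$ after cancellation), and a single scaling $x\mapsto x/c$ brings $R$ to $R_b(x)=(x^3+bx^2)/(x+1)$ with $b=ac\in K\setminus\F_2$. The main obstacle is the uniqueness of $b$. For this I would note that any $(B,A)\in G(K)$ sending $R_b$ to $R_{b'}$ must have $A$ and $B$ preserving the common ramification and branch set $\{0,\infty\}$ and respecting its pairing, leaving only the cases where both $A$ and $B$ are of the form $x\mapsto\lambda x$ (fixing $\{0,\infty\}$ pointwise) or both of the form $x\mapsto\mu/x$ (swapping its two points). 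A direct coefficient comparison in each of these two cases forces $b=b'$, establishing the claimed uniqueness and completing the classification.
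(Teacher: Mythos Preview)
Your argument is correct and follows the same overall strategy as the paper (enumerate the possible ramification types, then normalize within each), but your treatment of the $(2,2)$ family differs in two respects worth noting. First, the paper normalizes by placing one ramification point at $\infty$ and the \emph{other preimage} of its branch point at $0$, so that the common shape $(x^3+ax^2+bx+c)/x$ serves for both the $(2)$ and the $(2,2)$ cases at once; you instead send both ramification points to $\{0,\infty\}$, which is slightly more direct but handles the two cases separately. Second, and more substantively, the paper proves uniqueness of the parameter by identifying it (up to adding $1$) with the cross-ratio of the four preimages of the two branch points, an intrinsic invariant automatically preserved under equivalence; you instead argue by an explicit coefficient comparison after reducing to the two possible shapes of $(B,A)$. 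The cross-ratio argument is shorter and more conceptual, and it transports cleanly to the finite-field analysis in Section~\ref{sec:small_char}; your stabilizer computation, on the other hand, directly exhibits the order-two stabilizer that reappears in Theorem~\ref{thm:cubic_expr_finite_2_count}(v).
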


The lists of ramification indices for the above expressions are, respectively,
$(3,3)$, $(3,2)$, $(2)$, and $(2,2)$ for the parametric family.

\begin{proof}
The expression $R(x)$ must have at least one ramification point, so we may assume that $\infty$ is a ramification point,
and that $\infty$ is also the corresponding branch point.

If $\infty$ has ramification index $3$ then $R(x)$ is a polynomial, which we may assume monic, say $R(x)=x^3+ax^2+bx+c$.
Because the remaining ramification points are the roots of its derivative $x^2+b$,
there is precisely one further ramification point, namely $\sqrt{b}$.
We may now assume that this ramification point is $0$, and also that $0$ is the corresponding branch point.
Then $R(x)=x^3+ax^2$, and when $a\neq 0$ this is equivalent to $x^3+x^2$.

Now we may suppose that $\infty$ has ramification index $2$.
We may also assume that the other preimage of $\infty$ is $0$.
Further multiplying $R(x)$ by a scalar we find that it is equivalent to $(x^3+ax^2+bx+c)/x$, for some $a,b,c\in K$ with $c\neq 0$.

If $R(x)$ has no further ramification points besides $\infty$, then the expression
\[
\frac{d}{dx}\frac{x^3+ax^2+bx+c}{x}=\frac{ax^2+c}{x^2}
\]
has no zeroes in $K$, forcing $a=0$ (and of course $c\neq 0$).
Now $(x^3+bx+c)/x$ is clearly equivalent to $(x^3+c)/x$ by adding a constant, and finally to $(x^3+1)/x$ as desired.

Finally, suppose $R(x)$ has a further ramification point besides $\infty$.
We may assume that to be $1$, whence $a=c\neq 0$, and the corresponding branch point to be $0$, whence $b=1$.
Hence $R(x)$ is equivalent to $(x^2+1)(x+a)/x$.
When $a=1$ we get $(x+1)^3/x$, which is ramified at $1$ with index $3$, and hence is equivalent to $x^3+x^2$.
Otherwise we get a parametric family with $a\in K\setminus{\F_2}$, having ramification indices $(2,2)$.

Different values of the parameter $a$ yield inequivalent expressions.
In fact, the parameter $a=(\infty,0;1,a)$ is determined as the cross-ratio of the quadruple consisting
of the preimages of the two branch points, of which two are the ramification points.
Note that such cross-ratio is independent of which of the two ramification points was initially chosen to be $\infty$,
as interchanging the first two entries and simultaneously the last two entries of a cross-ratio leaves it unchanged.

The variant of this parametric family given in Theorem~\ref{thm:cubic_expr_alg_closed_2} is obtained as
$(x^2+1)(x+a)/x\circ(x+1)=(x^3+cx^2)/(x+1)$, where $c=a+1\in K\setminus{\F_2}$.
\end{proof}

\begin{rem}\label{rem:cubic_expr_alg_closed_2}
The parametric family of
Theorem~\ref{thm:cubic_expr_alg_closed_2}
bears some resemblance to the family $R_c(x)$ of Theorem~\ref{thm:cubic_expr_alg_closed}.
However, the latter would simplify to the quadratic expression $(x^3+cx^2)/(x+c)=x^2$ when viewed modulo $2$.
A variant of the family of
Theorem~\ref{thm:cubic_expr_alg_closed_2} is
$(x^3+x^2)/(x+c')$, with $c'\in K\setminus\F_2$.
One further variant for the parametric family, which is better suited
to the purpose of passing to a finite ground field in Section~\ref{sec:small_char}, is
\[
(x/c^2+1)
\circ
\frac{x^3+cx^2}{x+1}
\circ
(cx+c)
=\frac{x^3+c''}{x+c''},
\]
where $c''=1+1/c$,
hence again $c''\in K\setminus\F_2$, uniquely determined by the equivalence class.
These cubic expressions have $\infty$ and $1$ as ramification points of index $2$, and also as the corresponding branch points.
The further preimages of the branch points are $c''$ and $0$ in this parametrization.
\end{rem}

\section{Cubic expressions over a finite field}\label{sec:cubic_finite}
In this section we find representatives for the
equivalence classes of cubic rational expressions $R(x)$ over a finite field,
restricting to those of expressions with at most three distinct ramification points.
The idea is to view $R(x)$ over the algebraic closure $\barFq$ of $\F_q$ and then apply the corresponding classification result from the previous section.
 %\nota{This could be framed in terms of Galois cohomology etc., but...}
Here we view $\Proj^1(\F_q)$ as a subset of $\Proj^1(\barFq)$, but in our context we may also identify them
with $\F_q\cup\{\infty\}$ and $\barFq\cup\{\infty\}$, respectively.
The Galois group of $\barFq/\F_q$, which is topologically generated by $\alpha\mapsto\alpha^q$, acts on the latter, with the former as the set of fixed points.
The Galois group permutes the ramification points of $R(x)$ over $\barFq$, and the corresponding branch points in a matching way, preserving the ramification indices.

We start with cubic rational expressions having only two ramification points in $\barFq\cup\{\infty\}$,
hence equivalent to $x^3$ over $\barFq$ according to Theorem~\ref{thm:cubic_expr_alg_closed}.
In this case it is not any harder to deal with expressions of arbitrary degree $r$ having precisely two ramification points in $\barFq$.

\begin{theorem}\label{thm:x^r_finite}
Let $r$ be a positive integer, let $\F_q$ be a finite field of odd characteristic not dividing $r$,
and let $\sigma\in\F_q$ be a nonsquare.
Let $R(x)$ be a rational expression over $\F_q$, of degree $r$ and with only two ramification points in $\barFq$.
Then $R(x)$ is equivalent over $\F_q$ to either $x^r$, or
\[
\biggl(\sum_{h}\binom{r}{2h}x^{r-2h}\sigma^h\biggr)
\bigg/
\biggl(\sum_{h}\binom{r}{2h+1}x^{r-2h-1}\sigma^h\biggr).
\]
\end{theorem}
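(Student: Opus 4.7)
The plan is to first use the hypothesis $\charac(\F_q)\nmid r$ to conclude that $R(x)$ is separable (an inseparable rational expression must lie in $\F_q(x^p)$, forcing $p\mid r$), so that Theorem~\ref{thm:Hurwitz} holds with equality.  With only two ramification points, the equality forces each to have index exactly $r$, making each the unique preimage of its branch.  In particular, the Frobenius automorphism of $\barFq/\F_q$ permutes the two ramification points in parallel with the two branch points, which naturally splits the argument into two cases.

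When both ramification points lie in $\F_q$ (so both branch points do too), I mimic the argument in the algebraically closed case: pick $A,B\in\PGL_2(\F_q)$ moving the ramification points to $\infty,0$ and the branch points to $\infty,0$ respectively, so that $B\circ R\circ A^{-1}=\lambda x^r$ for some $\lambda\in\F_q^\ast$, then absorb $\lambda$ by one final $\F_q$-defined scaling on the target.  The more interesting case is when the ramification points are Frobenius-conjugate.  Here I fix $\tau\in\F_{q^2}$ with $\tau^2=\sigma$ and write the ramification points as $P_1=u+v\tau$ and $P_2=u-v\tau$, and the corresponding branch points as $u'\pm v'\tau$, with $u,v,u',v'\in\F_q$.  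The $\F_q$-defined linear maps $A(x)=(x-u)/v$ and $B(x)=(x-u')/v'$ then produce $\tilde R=B\circ R\circ A^{-1}\in\F_q(x)$, ramified at $\pm\tau$ and sending $\tau\mapsto\tau$, $-\tau\mapsto-\tau$ on branches.

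The next step is to pass over $\F_{q^2}$ and conjugate by $T(x)=(x+\tau)/(x-\tau)$, which sends $\{\tau,-\tau\}$ to $\{\infty,0\}$.  This transforms $\tilde R$ into $\lambda x^r$ for some $\lambda\in\F_{q^2}^\ast$; a short calculation using $T^{\phi_q}=1/T$ (with $\phi_q$ denoting the Frobenius) converts the $\F_q$-rationality of $\tilde R$ into the constraint $\lambda^{q+1}=1$.  To reduce to $\lambda=1$ while remaining over $\F_q$, I would analyze the stabilizer of the unordered pair $\{\tau,-\tau\}$ in $\PGL_2(\F_q)$: its elements are the transformations $(ax\pm c\sigma)/(cx\pm a)$ with $a,c\in\F_q$, and conjugation by $T$ sends these to the group of maps $x\mapsto\alpha x$ and $x\mapsto\alpha/x$ of $\PGL_2(\F_{q^2})$, with $\alpha=(a+c\tau)/(a-c\tau)$ running through the $(q+1)$-th roots of unity $\mu_{q+1}\subset\F_{q^2}^\ast$.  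Pre- and post-composing $\tilde R$ with elements $C,D$ of this stabilizer yields $(\alpha\lambda/\beta^r)x^r$ after conjugation by $T$; choosing any $\beta\in\mu_{q+1}$ and $\alpha:=\beta^r/\lambda\in\mu_{q+1}$ forces the coefficient to $1$.

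Finally, a binomial expansion of $(x\pm\tau)^r$ identifies $T^{-1}\circ x^r\circ T$ with the explicit expression in the statement, completing the reduction.  The two representatives are inequivalent over $\F_q$ because the ramification points of $x^r$ lie in $\F_q$ whereas those of the parametric expression do not, and equivalence over $\F_q$ preserves the field of definition of the ramification locus.  The main technical obstacle will be the pair of computations in the conjugate case: deriving $\lambda^{q+1}=1$ from $\F_q$-rationality, and showing that the stabilizer of $\{\tau,-\tau\}$ in $\PGL_2(\F_q)$ acts on the coefficient $\lambda\in\mu_{q+1}$ richly enough to absorb it via the norm relation $\alpha\lambda=\beta^r$.
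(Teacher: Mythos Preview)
Your argument is correct, but the paper handles the Galois-conjugate case by a different and shorter device. Where you move the ramification and branch points to $\pm\tau$ by explicit affine $\F_q$-maps and are then left with a residual coefficient $\lambda\in\mu_{q+1}$ to absorb via the stabilizer of $\{\tau,-\tau\}$, the paper instead spends the third degree of freedom of $\PGL_2$ up front: it takes the \emph{unique} M\"obius transformation $A$ over $\barFq$ sending $(\tau,-\tau,0)$ to $(\rho,\rho^q,0)$, observes that Frobenius permutes the source triple and the target triple compatibly, and concludes $A\in\PGL_2(\F_q)$ by uniqueness of $A$; similarly for $B$ on the branch side, sending $\bigl(R(\rho),R(\rho^q),R(0)\bigr)$ to $(\tau,-\tau,0)$. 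Then $B\circ R\circ A$ is already pinned down completely --- ramified at $\pm\tau$ with index $r$, branches $\pm\tau$, and $0\mapsto 0$ --- so it must equal the displayed expression (as one sees on the equivalent form $x^r$), with no $\lambda$ left over. Your route has the merit of making the norm-one torus $\mu_{q+1}$ and the structure of the stabilizer of $\{\pm\tau\}$ explicit, which is exactly the computation appearing later in the proofs of Theorems~\ref{thm:quadratic_finite_count} and~\ref{thm:cubic_expr_finite_2_count}; the paper's route trades that for a one-line Galois-descent argument and a uniqueness appeal.
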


The two special forms for $R(x)$ given in Theorem~\ref{thm:x^r_finite} are inequivalent, as the ramification points of the latter are not all in  $\Proj^1(\F_q)$.
Note that the latter form reads $(x^2+\sigma)/(2x)$ when $r=2$, as in Theorem~\ref{thm:quadratic_finite_count}, and
$(x^3+3\sigma x)/(3x^2+\sigma)$ when $r=3$, which will appear in Theorem~\ref{thm:cubic_expr_finite}.

\begin{proof}
View $R(x)$ over the algebraic closure $\barFq$ of $\F_q$, and let $\tau$ be a square root of $\sigma$ in $\F_{q^2}$.
Because the ramification indices satisfy $e_R(P)\le\deg(R)=r$, according to Hurwitz's formula each of the two ramification points has index $r$.

If the two ramification points of $R(x)$ belong to $\Proj^1(\F_q)$ then $R(x)$ is equivalent to $x^r$ over $\F_q$.
This can be shown exactly as for the corresponding case in the proof of Theorem~\ref{thm:cubic_expr_alg_closed} (for $r=3$ and over $\barFq$),
just noting that the M\"obius transformations $A(x)$ and $B(x)$ used there can be taken to be defined over $\F_q$.

Before dealing with the remaining case note that
\begin{equation*}
\tau\frac{x+\tau^r}{x-\tau^r}
\circ x^r\circ
\tau\frac{x+\tau}{x-\tau}
=
\frac{
\sum_{\text{$k$ even}}\binom{r}{k}x^{r-k}\tau^k
}{
\sum_{\text{$k$ odd}}\binom{r}{k}x^{r-k}\tau^{k-1}
}
=
\frac{
\sum_{h}\binom{r}{2h}x^{r-2h}\sigma^h
}{
\sum_{h}\binom{r}{2h+1}x^{r-2h-1}\sigma^h
}
\end{equation*}
is equivalent to $x^r$ over $\barFq$, but has $\tau$ and $-\tau$ as ramification points, with $\tau$ and $-\tau$ as corresponding branch points.

Now suppose that two ramification points of $R(x)$ do not belong to $\Proj^1(\F_q)$.
Then they must be Galois-conjugated over $\F_q$, say $\rho,\rho^q\in\barFq\setminus\F_q$.
There is a unique M\"obius transformation $A(x)$ over $\barFq$
which maps $\tau$ to $\rho$, $-\tau$ to $\rho^q$, and fixes some other point of $\Proj^1(\F_q)$, say $0$.
But $A(x)$ is actually defined over $\F_q$, because the associated map $\Proj^1(\barFq)\to\Proj^1(\barFq)$
commutes with the Galois automorphism $\alpha\mapsto\alpha^q$ of $\barFq$.
Now $(R\circ A)(x)$ has ramification points $\tau$ and $-\tau$.
Similarly, there is a unique M\"obius transformation $B(x)$ over $\barFq$
which maps the branch points $R(\rho)$ and $R(-\rho)$ of $R(x)$
(or of $(R\circ A)(x)$, for that matter) to $\tau$ and $-\tau$, respectively,
and maps $R(0)$ to $0$, and similarly $B(x)$ must also be defined over $\F_q$.
Now $(B\circ R\circ A)(x)$ has ramification points $\tau$ and $-\tau$, both with index $r$,
and with corresponding branch points $\tau$ and $-\tau$, and maps $0$ to $0$.
But then $(B\circ R\circ A)(x)$ equals the expression given in Theorem~\ref{thm:x^r_finite},
because such conditions uniquely characterize the latter among rational expressions of degree $r$ over $\barFq$,
a fact which can be seen at once on its equivalent expression $x^r$.
\end{proof}

\begin{rem}\label{rem:cubic-invariant}
%\fbox{Mention Shanks}
According to~\cite[Lemma~11]{MatPiz:self-reciprocal},
a polynomial $F(x)$ of degree $3n$ over a field satisfies $(x-1)^{3n}\cdot F\bigl(1/(1-x)\bigr)=F(x)$
precisely if it has the form
$
F(x)=(x^2-x)^n\cdot f\bigl((x^3-3x+1)/(x^2-x)\bigr)
$
for some polynomial $f(x)$ of degree $n$.
This can be thought of a cubic version of the interpretation of self-reciprocal polynomials as arising through a quadratic transformation
recalled in the Introduction.
However, such polynomial $F(x)$ is very far from the general case of a polynomial arising through an arbitrary cubic transformation,
as $(x^3-3x+1)/(x^2-x)$ has precisely two ramification points
(in characteristic not three, each with ramification index three), namely, the roots of $x^2-x+1$.
In particular, according to Theorem~\ref{thm:x^r_finite}, over a field $\F_q$ with $q-1$ a multiple of three,
that cubic expression is equivalent to $x^3$.
\end{rem}

Now we are ready to classify cubic expressions over $\F_q$ having at most three ramification points.

\begin{theorem}\label{thm:cubic_expr_finite}
Let $\F_q$ be a finite field of characteristic at least five,
and let $\sigma\in\F_q$ be a nonsquare.
Then any cubic rational expression $R(x)$ over $\F_q$ with at most three ramification points (over $\barFq$) is equivalent to either
$x^3$, or $(x^3+3\sigma x)/(3x^2+\sigma)$, or $x^3-3x$, or $x^3-3\sigma x$.
\end{theorem}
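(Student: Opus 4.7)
The plan is to lift a $G(\barFq)$-classification to a $G(\F_q)$-classification by exploiting the Galois action of $\Gal(\barFq/\F_q)$ on ramification and branch points.  Over $\barFq$, Theorem~\ref{thm:cubic_expr_alg_closed} (applicable since $\charac(\F_q)\ge 5$) shows that any cubic $R(x)$ has either two ramification points (equivalent to $x^3$) or four (ruled out by hypothesis) or three, the last occurring precisely for $c\in\{1/2,2,-1\}$ and yielding the class of $R_{1/2}(x)$, which by Remark~\ref{rem:cubic_expr_alg_closed} is more conveniently represented by the Dickson polynomial $x^3-3x$.  These two cases will be handled separately.

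\emph{Two ramification points.}  In this case $R(x)$ has degree $3$ and only two ramification points over $\barFq$, so Theorem~\ref{thm:x^r_finite} applied with $r=3$ immediately gives that $R(x)$ is equivalent over $\F_q$ either to $x^3$ or to the expression displayed in that theorem, which for $r=3$ simplifies to $(x^3+3\sigma x)/(3x^2+\sigma)$.

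\emph{Three ramification points.}  Here $R(x)$ has one ramification point of index $3$ and two of index $2$.  The Galois automorphism $\alpha\mapsto\alpha^q$ preserves ramification indices, so the index-$3$ ramification point $P_1$ is Galois-fixed, hence lies in $\Proj^1(\F_q)$, as does its branch point $Q_1=R(P_1)$.  The two index-$2$ ramification points $P_2,P_3$ are either both fixed by Galois or swapped; the same dichotomy holds for the corresponding branch points $Q_2,Q_3$.  I would treat these two sub-cases in turn, in each case aiming to construct M\"obius transformations $A,B\in\PGL_2(\F_q)$ so that $B\circ R\circ A$ has a preassigned ramification and branch configuration, which forces it to equal the target expression.

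Concretely, in the totally split sub-case, three-transitivity of $\PGL_2(\F_q)$ on $\Proj^1(\F_q)$ yields $A$ sending $\infty,1,-1$ to $P_1,P_2,P_3$ and $B$ sending $Q_1,Q_2,Q_3$ to $\infty,-2,2$.  The composite $B\circ R\circ A$ is then a cubic polynomial (as $\infty\mapsto\infty$) with critical points $\pm 1$ and critical values $\mp 2$; a two-line calculation with $R(x)=ax^3+bx^2+cx+d$ forces $R(x)=x^3-3x$.  In the Galois-conjugate sub-case, $P_2=\rho$, $P_3=\rho^q$ with $\rho\in\F_{q^2}\setminus\F_q$, and analogously for the $Q_i$; I would choose $A$ mapping $\infty,\sqrt{\sigma},-\sqrt{\sigma}$ to $P_1,\rho,\rho^q$, and $B$ mapping $Q_1,R(\rho),R(\rho^q)$ to $\infty,-2\sigma\sqrt{\sigma},2\sigma\sqrt{\sigma}$.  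The key point, and the only real obstacle, is descent of $A$ and $B$ from $\PGL_2(\F_{q^2})$ to $\PGL_2(\F_q)$: each is characterized by three conditions that are jointly stable under the Galois automorphism (the $\F_q$-rational data is fixed, and the conjugate pair of conditions is permuted), so by sharply 3-transitivity $A^q=A$ and $B^q=B$.  Once descent is secured, the same uniqueness argument as before identifies $B\circ R\circ A$ with $x^3-3\sigma x$, whose ramification points are $\infty,\pm\sqrt{\sigma}$ with branch points $\infty,\mp 2\sigma\sqrt{\sigma}$.

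Finally, the four listed representatives are pairwise inequivalent because equivalence preserves both the number of ramification points and whether they lie in $\Proj^1(\F_q)$: the first two have two ramification points (distinguished further by Theorem~\ref{thm:x^r_finite}), while $x^3-3x$ has all three ramification points in $\F_q$ and $x^3-3\sigma x$ has its two index-$2$ ramification points outside $\F_q$.
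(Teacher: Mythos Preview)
Your proposal is correct and follows essentially the same route as the paper's proof: split according to the number of ramification points, invoke Theorem~\ref{thm:x^r_finite} for the two-point case, and in the three-point case use Galois-invariance of the index-$3$ point together with the sharply $3$-transitive action of $\PGL_2$ to descend the M\"obius transformations $A,B$ to $\F_q$, distinguishing the totally split and conjugate-pair sub-cases. The only cosmetic difference is that you work directly with the target $x^3-3x$ (ramification at $\infty,\pm 1$) rather than passing through $R_{1/2}(x)$ and Remark~\ref{rem:cubic_expr_alg_closed}, and you spell out the uniqueness calculation explicitly; the paper instead appeals back to the normalization in the proof of Theorem~\ref{thm:cubic_expr_alg_closed}.
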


\begin{proof}
View $R(x)$ over the algebraic closure $\barFq$ of $\F_q$, and let $\tau$ be a square root of $\sigma$ in $\F_{q^2}$.
If $R(x)$ has only two ramification points in $\Proj^1(\barFq)$, then Theorem~\ref{thm:x^r_finite} applies.
Hence $R(x)$ is equivalent to either $x^3$ or to $(x^3+3\sigma x)/(3x^2+\sigma)$ over $\F_q$,
depending on whether its ramification points belong to $\Proj^1(\F_q)$ or not.

Now suppose that $R(x)$ has three ramification points in $\Proj^1(\barFq)$.
Then according to Theorem~\ref{thm:cubic_expr_alg_closed} and Remark~\ref{rem:cubic_expr_alg_closed} it is equivalent to $x^3-3x$ over $\barFq$.
The latter has ramification points $\infty$ with index $3$, and $\pm 1$ with index $2$, and corresponding branch points $\infty$ and $\mp 2$.
The polynomial $x^3-3\sigma x$ has $\infty$  as a ramification point with index $3$,
and $\tau$ and $-\tau$ as ramification points with index $2$.
The corresponding branch points are $\infty$, $-2\tau^3$ and $2\tau^3$, respectively.
Hence $x^3-3\sigma x$ is also equivalent to $x^3-3x$ over $\barFq$ (or even over $\F_{q^2}$), but not over $\F_q$.
We now show that $R(x)$ is equivalent to either $x^3-3x$ or $x^3-3\sigma x$ over $\F_q$.

The unique ramification point of $R(x)$ with index $3$ must be fixed by the Galois automorphism $\alpha\mapsto\alpha^q$,
hence it belongs to $\Proj^1(\F_q)$, and so does the corresponding branch point.
If the other two ramification points belong to $\Proj^1(\F_q)$, and so obviously do the corresponding branch points,
then $R(x)$ is actually equivalent to $x^3-3x$ over $\F_q$.
This can be seen by composing $R(x)$ on both sides with appropriate M\"obius transformations so as to turn the
ramification points of $R(x)$, and also the corresponding branch points, into $\infty$, $0$, and $1$,
as in the proof of Theorem~\ref{thm:cubic_expr_alg_closed}.
In fact, the M\"obius transformations $A(x)$ and $B(x)$ employed to achieve that are actually defined over $\F_q$ because each takes
some particular triple of distinct points of $\Proj^1(\F_q)$ to another such triple.

Now suppose that $R(x)$ has three ramification points in $\Proj^1(\barFq)$, but those with index two are not in $\Proj^1(\F_q)$.
Then they will be Galois-conjugated over $\F_q$, say $\rho$ and $\rho^q$.
The argument is very similar to the one we used in the proof of Theorem~\ref{thm:x^r_finite}.
Thus, there is a unique M\"obius transformation $A(x)$ over $\barFq$
which maps $\tau$ to $\rho$, $-\tau$ to $\rho^q$, and the third ramification point of $R(x)$ to $0$.
There is also a unique M\"obius transformation $B(x)$ over $\barFq$
which maps the branch points $R(\rho)$ and $R(-\rho)$ of $R(x)$
to $-2\tau^3$ and $2\tau^3$, respectively, and the third branch point of $R(x)$ to $\infty$.
Both $A(x)$ and $B(x)$ are actually defined over $\F_q$.
Then $(B\circ R\circ A)(x)$ equals $x^3-3\sigma x$,
as the latter is uniquely characterized among cubic rational expressions
by having ramification points $\infty$, $\tau$, $-\tau$ with corresponding images $\infty$, $-2\tau^3$, $2\tau^3$,
and $\infty$ having ramification index three.
\end{proof}

 %{\color{blue}
 %\myframe{
 %Remark seguente da togliere, aggiustando la sua citazione nell'Intro. Magari lasciare una breve frase.
 %}
 %}
 %
 %\begin{rem}\label{rem:4_points}
 %We refrain from attempting to find canonical forms over finite fields for maps with four ramification points.
 %The difficulty lies in rearranging those ramification points (and then the corresponding branch points) in a convenient way,
 %given a range of possibilities for their location:
 %they could all be in $\Proj^1(\F_q)$ (in which case $R(x)$ is equivalent to $R_c(x)$ for an appropriate value of $c$);
 %precisely two of them could be in $\Proj^1(\F_q)$ (with the other two being the roots of an irreducible quadratic polynomial over $\F_q$);
 %just one of them could be in $\Proj^1(\F_q)$ (with the other three being the roots of an irreducible cubic polynomial over $\F_q$);
 %they could be the roots of two distinct irreducible quadratic polynomials over $\F_q$;
 %they could be the roots of an irreducible quartic polynomial over $\F_q$.
 %\end{rem}

\begin{theorem}\label{thm:cubic_expr_finite_3}
Let $\F_q$ be a finite field of characteristic three,
and let $\sigma\in\F_q$ be a nonsquare.
Then any cubic rational expression $R(x)$ over $\F_q$ with at most three ramification points is equivalent to
either $x^3+x^2$, or $x^3+x$, or $x^3+\sigma x$.
\end{theorem}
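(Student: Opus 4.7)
The plan is to apply Theorem~\ref{thm:cubic_expr_alg_closed_3} to classify $R(x)$ over $\barFq$. The hypothesis of at most three ramification points excludes both the inseparable $x^3$ (ramified everywhere) and the parametric family $R_c(x)$ (which has four ramification points in characteristic three). Hence over $\barFq$, $R(x)$ is equivalent either to $x^3+x^2$, whose ramification indices at $\infty$ and $0$ are $3$ and $2$ with respective branch points $\infty$ and $0$, or to $x^3+x$, which has $\infty$ as its unique ramification point (index three, branch point $\infty$).

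In both cases the ramification and branch points of $R(x)$ lie in $\Proj^1(\F_q)$: for the first, the Galois automorphism $\alpha\mapsto\alpha^q$ preserves ramification indices, and the two indices are distinct so each point is Galois-fixed; for the second there is only one ramification point and it must be Galois-fixed, and likewise for the branch point. Then, exactly as in the proof of Theorem~\ref{thm:cubic_expr_finite}, we may select M\"obius transformations defined over $\F_q$ to normalize those points, sending ramification to ramification and branch to branch.

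After this normalization $R(x)$ becomes a monic cubic polynomial. In the first case the conditions $R(0)=0$ and $R'(0)=0$, combined with $R'(x)=-bx+c$ in characteristic three, force $R(x)=x^3+bx^2$ with $b\in\F_q^{\ast}$; a scaling on both sides over $\F_q$ then reduces this to $x^3+x^2$. In the second case, the absence of any finite ramification point forces $b=0$, and separability forces $c\neq 0$; post-composing with the translation $x\mapsto x-d$ then yields $R(x)=x^3+cx$ for some $c\in\F_q^{\ast}$.

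The crux of the proof is identifying the $\F_q$-equivalence classes among the polynomials $x^3+cx$, $c\in\F_q^{\ast}$. Any equivalence $B\circ(x^3+cx)\circ A^{-1}=x^3+c'x$ must preserve the unique ramification and branch points at $\infty$, so $A$ and $B$ are affine. Writing $A^{-1}(x)=(x-\beta)/\alpha$ and $B(x)=\gamma x+\delta$, and using the identity $(x-\beta)^3=x^3-\beta^3$ valid in characteristic three, a coefficient comparison shows that the three relations defining the equivalence are simultaneously solvable precisely when $c'=\alpha^2 c$ for some $\alpha\in\F_q^{\ast}$. Hence $x^3+cx\sim_{\F_q} x^3+c'x$ if and only if $c/c'$ is a nonzero square in $\F_q$, giving exactly two equivalence classes, with representatives $x^3+x$ and $x^3+\sigma x$.
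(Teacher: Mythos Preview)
Your proof is correct and follows essentially the same strategy as the paper: reduce to the $\barFq$-classification of Theorem~\ref{thm:cubic_expr_alg_closed_3}, use Galois-invariance of the ramification data to descend the normalization to $\F_q$, and then split the family $x^3+cx$ according to whether $c$ is a square. Your treatment is in fact slightly more direct than the paper's in two places: in the $(3,2)$ case you normalize both ramification points simultaneously (obtaining $x^3+bx^2$ at once rather than passing through $x^3+x^2+bx$ and the explicit substitution of Theorem~\ref{thm:cubic_expr_alg_closed_3}), and in the $(3)$ case you carry out the affine coefficient comparison explicitly, whereas the paper only asserts that ``one can similarly show'' the nonsquare case.
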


\begin{proof}
According to Theorem~\ref{thm:cubic_expr_alg_closed_3}, the cubic expression $R(x)$ is equivalent over $\barFq$ to either $x^3+x^2$ or $x^3+x$,
according to whether it has ramification indices $(3,2)$ or $(3)$.
In both cases $R(x)$ has a unique ramification point of index $3$, which must therefore belong to $\Proj^1(\F_q)$.
Therefore $R(x)$ can be replaced with an equivalent one which has $\infty$ as ramification point with index $3$,
and corresponding branch point $\infty$.
We may now follow the corresponding part of the proof of Theorem~\ref{thm:cubic_expr_alg_closed_3},
where most reductions can be done over $\F_q$,
and we point out those which require modification.
Thus, $R(x)$ is a cubic polynomial, and being separable by hypothesis one finds that it is equivalent over $\F_q$
to either $x^3+x^2+bx$, or $x^3+bx$, for some $b\in\F_q$.
The former polynomial is seen to be equivalent to $x^3+x^2$
as in the proof of Theorem~\ref{thm:cubic_expr_alg_closed_3}.
The polynomial $x^3+bx$, however, is equivalent to
$(x/\sqrt{b}\,^3)\circ(x^3+bx)\circ(\sqrt{b}\,x)
=x^3+x$
only when $b$ is a square in $\F_q$.
In the complementary case one can similarly show that $R(x)$ is equivalent to $x^3+\sigma x$.
\end{proof}

Over a finite field $\F_q$ of characteristic three we also have inseparable cubic expressions,
which are ramified everywhere.
Those are easily seen to be all equivalent to $x^3$ already over $\F_q$.

\section{Bounding the number of equivalence classes}\label{sec:bound}

According to Lemma~\ref{lemma:coprime_probability} there are a total of $q^5(q^2-1)$ cubic rational expressions over $\F_q$.
They divide into a number of equivalence classes, that is,
orbits under the action of $G(\F_q)=\PGL_2(\F_q) \times \PGL_2(\F_q)$.
In Section~\ref{sec:cubic_finite} we have determined representatives for those orbits consisting of expressions with at most three ramification points.
In this section we will produce an upper bound for the total number of equivalence classes,
making use of the orbit-stabilizer theorem to estimate the size of classes where explicit representatives are not available.

The gist of our argument is that a lower bound on the length of an orbit of cubic rational expressions with four distinct ramification points
follows from an upper bound on the order of the stabilizer in $G(\F_q)$
of any expression $R(x)$ in the orbit.
The stabilizer of $R(x)$ permutes the four ramification points of $R(x)$ in $\barFq\cup\{\infty\}$,
and the four branch points correspondingly.
 %According to Remark~\ref{rem:lambda_and_mu}, the four ramification points alone do not determine $R(x)$ up to equivalence,
 %but only up to two choices for $\mu$, the cross-ratio of the corresponding branch points
Thus, we may identify the stabilizer of $R(x)$ with a subgroup of the symmetric group $S_4$ on the four ramification points of $R(x)$.
Consequently, the stabilizer has order at most $|S_4|=24$, and hence the orbit has length at least $q^2(q^2-1)^2/24$.
However, it turns out that the stabilizer cannot have order larger than $4$, except for at most one orbit with stabilizer of order $12$.
To prove that we need the following auxiliary result.

\begin{lemma}\label{lemma:stabilizer}
Let $R(x)$ be a cubic rational expression over an algebraically closed field $K$,
and suppose $R(x)$ has four distinct ramification points.
Then the stabilizer of $R(x)$ in the action of $\PGL_2(K) \times \PGL_2(K)$
acts as the Klein Four-Group on the ramification points,
except if the cross-ratio of the ramification points is a primitive sixth root of unity.
 %except if the cross-ratio $\lambda$ of the ramification points satisfies $\lambda^2-\lambda+1=0$,
If the latter occurs, then $R(x)$ is equivalent to $3x^2/(2x^3+1)$,
whose stabilizer acts as the alternating group $A_4$ on the ramification points.
\end{lemma}

 %Under the assumption that $R_c(x)$ has four distinct ramification points,
 %$K$ cannot have characteristic two, and
 %$\lambda^2-\lambda+1=0$ cannot occur when $K$ has characteristic three.

\begin{proof}
Because $R(x)$ has four distinct ramification points, $K$ does not have characteristic two.
According to Theorems~\ref{thm:cubic_expr_alg_closed} and~\ref{thm:cubic_expr_alg_closed_3},
the cubic expression $R(x)$ is equivalent to
\[
R_c(x)
=
\frac{x^3+(c-2)x^2}{(2c-1)x-c}
\]
for some $c\in K\setminus\{0,1,1/2,2,-1\}$.
Now $R_c(x)$ has ramification points
$\infty$, $0$, $1$, $\lambda$, with corresponding branch
points $\infty$, $0$, $1$, $\mu$, where
$\lambda=c(2-c)/(2c-1)$ and
$\mu=c(2-c)^3/(2c-1)^3$.
Let $H$ be the stabilizer of $R_c(x)$ in the action of $G(K)$.
 %$\PGL_2(K) \times \PGL_2(K)$.
Every element of $H$ induces a permutation of the four ramification points in the domain of $R_c(x)$, and the same permutation of the four branch points in the codomain.

One may check that
 %\[
 %\frac{\mu}{x}\circ R_c(x)\circ\frac{\lambda}{x}=R_c(x)
 %\]
\[
(\mu/x)\circ R_c(x)\circ(\lambda/x)=R_c(x)
\]
and
\[
\frac{x-\mu}{x-1}\circ R_c(x)\circ\frac{x-\lambda}{x-1}=R_c(x).
\]
 %\[
 %(x-\mu)/(x-1)\circ R_c(x)\circ(x-\lambda)/(x-1)=R_c(x).
 %\]
Because $\lambda/x$ induces the permutation $(\infty,0)(1,\lambda)$ on the ramification points,
and $(x-\lambda)/(x-1)$ induces the permutation $(\infty,1)(0,\lambda)$,
we conclude that $H$ always contains the Klein Four-Group.

No element of $H$ can induce the transposition $(\infty,0)$ on the ramification points, because
\[
(1/x)\circ R_c(x)\circ(1/x)=R_{1/c}(x)
\]
 %\[
 %\frac{1}{x}\circ R_c(x)\circ\frac{1}{x}=
 %\frac{-cx^3+(2c-1)x^2}{(c-2)x+1}
 %=R_{1/c}
 %\]
equals $R_c(x)$ if and only if $c=\pm 1$, which are excluded values.
Consequently, $H$ cannot act as the full symmetric group $S_4$ on the four ramification points.

It also follows that $H$ cannot act as a dihedral group of order eight.
In fact, precisely one of the transpositions $(\infty,0)$, $(\infty,1)$, $(0,1)$
belongs to each of the three subgroups of order eight of the symmetric group on the four ramification points
$\infty$, $0$, $1$, $\lambda$ of $R_c(x)$.
We have already ruled out the first case.
If some element of the stabilizer $H$ of $R_c(x)$ induced
the transposition $(\infty,1)$ on the ramification points,
then the corresponding (conjugate) element of the stabilizer of
\[
\frac{1}{1-x}\circ R_c(x)\circ\frac{x-1}{x}
=R_{1/(1-c)}(x)
\]
would induce the transposition $(\infty,0)$ on the ramification points
$\infty$, $0$, $1$, $1/(1-\lambda)$
of $R_{1/(1-c)}(x)$,
but we have shown that to be impossible.
The third case is similarly ruled out by considering $R_{(c-1)/c}(x)$.

Thus, either $H$ acts as the Klein Four-Group on the four ramification points of $R_c(x)$, or as the alternating group $A_4$.
The latter occurs if and only if some element of $H$ acts as a $3$-cycle fixing the fourth ramification point $\lambda$.
That is the case precisely when
$R_{1/(1-c)}(x)$ equals $R_c(x)$,
hence when $c^2-c+1=0$.
Because $c\neq -1$, the characteristic of $K$ is different from three.
The condition $c^2-c+1=0$ is then equivalent to $\lambda^2-\lambda+1=0$,
which means that $\lambda$ is a primitive sixth root of unity.

To prove the final claim note that the four ramification points of $R(x)=3x^2/(2x^3+1)$ coincide with the corresponding branch points,
and they are $0$, $1$,
and the two primitive sixth roots of unity in $K$.
Because their cross-ratio is also a primitive sixth root of unity, $R(x)$ is equivalent to $R_c(x)$ with $c$ a primitive sixth root of unity.
\end{proof}

\begin{theorem}\label{thm:cubic_expr_bound}
Over a finite field $\F_q$ of odd characteristic,
%the number of equivalence classes of cubic rational expressions does not exceed
there are no more than $4q$ equivalence classes of cubic rational expressions.
\end{theorem}

\begin{proof}
Suppose first that $q$ is not a power of three.
According to Theorem~\ref{thm:cubic_expr_finite}, under the action of $G(\F_q)$
there are precisely four orbits of cubic rational expressions over $\F_q$ having at most three ramification points.
We will use the orbit-stabilizer theorem to compute the length of those orbits, and then by difference to bound the number
of orbits of cubic rational expressions with four ramification points.

Similarly to the proof of Theorem~\ref{thm:quadratic_finite_count} one finds that the stabilizer of the cubic expression $x^3$ has order $2(q-1)$.
A calculation shows that the stabilizer of the other cubic expression with only two ramification points,
namely $(x^3+3\sigma x)/(3x^2+\sigma)$, where $\sigma\in\F_q$ is a nonsquare, has order $2(q+1)$.
 %\nota{Or possibly an argument working over $\F_{q^2}$?}
Consider now the two expressions $x^3-3x$ and $x^3-3\sigma x$.
Each has two (opposite) ramification points with index two, and one with index three.
Each element of a stabilizer must fix this third ramification point and permute the other two (and the branch points accordingly),
hence it is either the identity or the map given by pre- and post-composition with $-x$.
Hence in both cases the stabilizer has order $2$.

Thus far, we have seen two orbits consisting of $q^2(q^2-1)(q+1)/2$ and $q^2(q^2-1)(q-1)/2$ cubic expressions,
and another two orbits consisting of $q^2(q^2-1)^2/2$ expressions each.
Adding up these numbers, we find $q^2(q^2-1)(q^2+q-1)$ cubic rational expressions having two or three ramification points.
Subtracting this from the total number $q^5(q^2-1)$ of cubic expressions
we find that, over $F_q$ of characteristic at least five, the cubic expressions with four ramification points are in number of
$q^2(q^2-1)^2(q-1)$.
 %$q^2(q+1)^2(q-1)^3$.

Now suppose $\F_q$ has characteristic three, so Theorem~\ref{thm:cubic_expr_finite_3} applies.
As in case (i) of Theorem~\ref{thm:quadratic_finite_count_2}, the stabilizer of $x^3$ in $G(\F_q)$ is isomorphic to $\PGL_2(\F_q)$,
and hence the orbit of $x^3$ has length $q(q^2-1)$.
Using the two ramification points of the expression $x^3+x^2$ it follows at once that it has trivial stabilizer, and hence orbit length $q^2(q^2-1)^2$.
Finally, one easily finds that each of the expressions
$x^3+x$, or $x^3+\sigma x$
has stabilizer of order $2q$,
and hence orbit length
$q(q^2-1)^2/2$.
 %\fbox{check:
 %$\pm(x-(b^3+\sigma b))\circ(x^3+\sigma x)\circ(\pm x+b)=x^3+\sigma x$,
 %and same with $\sigma=1$}
We conclude by difference that, over $\F_q$ of characteristic three, the number of cubic expressions with precisely four ramification points is given by
$q^2(q^2-1)^2(q-1)$, the same expression we found for larger characteristic.

Now consider a cubic expression $R(x)$ over $\F_q$ with four distinct ramification points
in $\barFq\cup\{\infty\}$.
Its stabilizer $H$ in $G(\F_q)$ permutes those ramification points, and the four branch points correspondingly.
Suppose $|H|>4$.
Because $H$ is a subgroup of the stabilizer of $R(x)$ in $G(\barFq)$,
Lemma~\ref{lemma:stabilizer} implies that $\F_q$ does not have characteristic three, that $H\cong A_4$,
and that $R(x)$ is equivalent to
$3x^2/(2x^3+1)$ over $\barFq$.
We will show that they are actually equivalent over $\F_q$, and that $q\equiv 1\pmod{3}$.

The Galois automorphism $\alpha\mapsto\alpha^q$ of $\barFq$ permutes the four ramification points of $R(x)$.
%(and the branch points correspondingly).
However, because $H$ is a subgroup of $G(\F_q)$ the Galois automorphism commutes with the action of $H$ on the ramification points.
Since $A_4$, the permutation group induced by $H$ on the ramification points, has trivial centralizer in the symmetric group $S_4$,
the Galois automorphism must act trivially on the ramification points.
%the Galois automorphism must fix each of the four ramification points.
It follows that the ramification points are in $\F_q\cup\{\infty\}$, and hence so are the corresponding branch points.
Using a M\"obius transformation which maps the four ramification points of $R(x)$
to those of $3x^2/(2x^3+1)$, which are $0$, $1$, and the the primitive sixth roots of unity,
and another suitable M\"obius transformation for the branch points,
we find that $R(x)$ is equivalent to $3x^2/(2x^3+1)$.
Furthermore, because the cross-ratio of the ramification points is a primitive sixth root of unity, that belongs to $\F_q$, and hence $q\equiv 1\pmod{3}$.

 %In conclusion, all orbits of $\PGL_2(\F_q) \times \PGL_2(\F_q)$
 %on expressions with four ramification points have length at least $q^2(q^2-1)^2/4$,
 %except if $q\equiv 1\pmod{3}$, in which case there is precisely one shorter orbit, and that has length $q^2(q^2-1)^2/12$.

%\myframe{\fbox{Showing that the ramification points are rational:}
%The fact that the Galois automorphism $\alpha\mapsto\alpha^q$ of $\barFq$
%commutes with $R(x)$ tells us it permutes ramification points and branch points in a matching way.
%
%Now, the Galois automorphism $\alpha\mapsto\alpha^q$ of $\barFq$ also commutes with the action of the stabilizer.
%What does that tell us exactly?
%
%Say a pair $(B,A)\in G(\F_q)$ is in the stabilizer of $R(x)$, meaning $(B\circ R\circ A^{-1})(x)=R(x)$.
%Then $A(x)$ permutes the ramification points of $R(x)$, and $B(x)$ permutes the corresponding branch points in a matching way.
%The Galois automorphism $\Phi:\alpha\mapsto\alpha^q$ of $\barFq$ also permutes the ramification points, and the branch points of $R(x)$.
%It commutes with the action of $H$
%}

Summarizing, each orbit of cubic expressions with four ramification points has length at least $q^2(q^2-1)^2/4$,
except for precisely one orbit of length $q^2(q^2-1)^2/12$ if $q\equiv 1\pmod{3}$.
It follows that the number of orbits consisting of cubic rational expressions with four ramification points does not exceed $4(q-1)$,
including in the case where $q\equiv 1\pmod{3}$
(as the one shorter orbit only increases the integer bound by $1/3$).
The conclusion follows after including the remaining four orbits in the count.
 %We obtain the desired conclusion after adding to that number the four orbits found earlier of expressions with at most three ramification points
 %and, when present, the one orbit where the cross-ratio of the four ramification points is a primitive sixth root of unity.
\end{proof}

%{\color{blue}
%Some computational evidence suggests that the bound of Theorem~\ref{thm:cubic_expr_bound} is not optimal,
%and the number of orbits appears to be close to $2q$.
%}

\section{Cubic expressions in characteristic two}\label{sec:small_char}

In this section we determine representatives and lengths for all $G(\F_q)$-orbits of cubic expressions
in characteristic two.
Recall from Theorem~\ref{thm:cubic_expr_alg_closed_2} that no such expression can have more than two ramification points.

\begin{theorem}\label{thm:cubic_expr_finite_2}
Let $\F_q$ be a finite field of characteristic two.
Let $\sigma\in\F_q$ be an element of absolute trace $1$.
If $q$ is a square then let $\theta$ be a non-cube in $\F_q$.
Then any cubic rational expression $R(x)$ over $\F_q$  is equivalent to precisely one of the following:
\begin{itemize}
\item[(i)]
$x^3$, with ramification indices $(3,3)$;
\item[(ii)]
$\displaystyle
\frac{x^3+\sigma x+\sigma}{x^2+x+\sigma+1}$,
with ramification indices $(3,3)$;
\item[(iii)]
$x^3 + x^2$,
with ramification indices $(3,2)$;
\item[(iv)]
$(x^3+1)/x$
or, only in case $q$ is a square,
$(x^3+\theta)/x$ or $(x^3+\theta^2)/x$;
all these have ramification indices $(2)$;
\item[(v)]
$(x^3 + c)/(x+c)$,
with ramification indices $(2,2)$,
for a unique $c\in\F_q\setminus\F_2$;
\item[(vi)]
$\displaystyle
\frac{x^3 +bx^2 +\sigma x +(b+1)\sigma}
{x^2 +x +b+1+\sigma}$,
 %$\displaystyle
 %\frac{bx^3 +x^2 +b\sigma x +(b+1)\sigma}
 %{bx^2 +bx +b\sigma+b+1}$,
with ramification indices $(2,2)$,
for a unique $b\in\F_q\setminus\F_2$.
\end{itemize}
\end{theorem}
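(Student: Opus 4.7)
The plan is to mimic the approach of Theorem~\ref{thm:cubic_expr_finite}: view $R(x)$ over $\barFq$, apply the algebraically-closed classification in Theorem~\ref{thm:cubic_expr_alg_closed_2}, and examine how the Frobenius $\alpha\mapsto\alpha^q$ acts on the distinguished (ramification and branch) points to decide which of the listed forms $R(x)$ descends to over $\F_q$. The recurring principle is that a Möbius transformation over $\barFq$ characterized by the prescribed image of a triple of points is automatically $\F_q$-rational as soon as that prescription is Galois-equivariant, a fact I would invoke repeatedly, exactly as in Theorem~\ref{thm:x^r_finite} and Theorem~\ref{thm:cubic_expr_finite}.

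In case (i), both index-$3$ ramification points are Frobenius-fixed, hence $\F_q$-rational together with their branch points, and the reduction of Theorem~\ref{thm:cubic_expr_alg_closed_2} descends over $\F_q$ to give $x^3$. Case (iii) is analogous, the unique index-$3$ and unique index-$2$ ramification points being individually fixed, yielding $x^3+x^2$. For case (iv) the unique ramification point and its branch point may both be taken to be $\infty$, and the algebraically-closed argument shows $R(x)$ is equivalent over $\F_q$ to $(x^3+c)/x$ for some $c\in\F_q^\ast$. Any further equivalence between two such expressions must preserve the distinguished pair $\{\infty,0\}$ consisting of the ramification point at $\infty$ and the other preimage $0$ of the branch point $\infty$, hence is realized by $A(x)=\alpha x$ and $B(x)=\beta x+\gamma$; a short substitution forces $\gamma=0$, $\beta=\alpha^2$ and $c'=\alpha^3 c$. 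The equivalence classes of case~(iv) therefore correspond to cosets of cubes in $\F_q^\ast$, which number $\gcd(3,q-1)$; since $q$ is even, this equals $3$ precisely when $q$ is a square, giving the three representatives $1,\theta,\theta^2$, and equals $1$ otherwise.

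In case~(v), the two index-$2$ ramification points are Frobenius-fixed, the reduction of Theorem~\ref{thm:cubic_expr_alg_closed_2} combined with the variant of Remark~\ref{rem:cubic_expr_alg_closed_2} descends to $\F_q$, and yields $(x^3+c)/(x+c)$ with $c\in\F_q\setminus\F_2$, uniqueness of $c$ being already an invariant over $\barFq$. The genuinely new cases are~(ii) and~(vi), where the two ramification points form a Frobenius-conjugate pair $\rho,\rho^q\in\F_{q^2}\setminus\F_q$, and so do the branch points. Paralleling Theorem~\ref{thm:x^r_finite}, I would select a standard Frobenius-conjugate pair---the roots of $x^2+x+\sigma$, with $\sigma$ of absolute trace $1$---as a common model for the ramification points, and similarly for the branch points. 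The required Möbius transformations over $\barFq$ are Galois-equivariant by construction, hence automatically defined over $\F_q$. Normalising the resulting expression yields the form in~(ii) for the $(3,3)$ case, where no parameter is involved (one can verify the form, for instance, by checking that its derivative has numerator $(x^2+x+\sigma)^2$ up to a constant), and the form in~(vi) for the $(2,2)$ case, with $b$ parametrising the one-dimensional moduli.

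The technical heart of the argument, and the expected main obstacle, is case~(vi): producing the specific displayed rational expression and verifying that distinct $b\in\F_q\setminus\F_2$ give inequivalent expressions. Uniqueness of $b$ should follow from its interpretation as a cross-ratio of the four distinguished points (the two Frobenius-conjugate ramification points and the two remaining preimages of the branch points), suitably normalised modulo the stabiliser of the algebraically-closed normal form of Theorem~\ref{thm:cubic_expr_alg_closed_2}; but making this explicit in the Frobenius-conjugate setting requires a careful bookkeeping of how the stabiliser intertwines with the Galois action. Exhaustiveness of the list---that cases~(v) and~(vi) together account for every $(2,2)$ expression, and that no further equivalence classes occur---will be confirmed a posteriori by the orbit-length tallies of Theorem~\ref{thm:cubic_expr_finite_2_count}, whose totals match the $q^5(q^2-1)$ count of all cubic expressions from Lemma~\ref{lemma:coprime_probability}.
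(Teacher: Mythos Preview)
Your proposal is correct and follows essentially the same route as the paper: reduce to Theorem~\ref{thm:cubic_expr_alg_closed_2} over $\barFq$, then split each $\barFq$-class according to whether Frobenius fixes or swaps the ramification points, invoking the Galois-equivariance principle from Theorem~\ref{thm:x^r_finite} to descend the relevant M\"obius transformations. The one substantive difference is that the paper produces the explicit forms in (ii) and (vi) by writing down concrete M\"obius transformations $A(x),B(x)$ over $\F_{q^2}$ (with $\tau^2+\tau=\sigma$) and computing the composite directly, in particular obtaining the relation $c=1/b^2$ between the $\barFq$-invariant and the $\F_q$-parameter in case~(vi); this makes uniqueness of $b$ immediate from uniqueness of $c$ (squaring being bijective in characteristic two), and avoids any forward appeal to the orbit counts of Theorem~\ref{thm:cubic_expr_finite_2_count}.
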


\begin{proof}
According to Theorem~\ref{thm:cubic_expr_alg_closed_2} and Remark~\ref{rem:cubic_expr_alg_closed_2},
over the algebraic closure $\barFq$ of $\F_q$ the expression $R(x)$ is equivalent to either
$x^3$, or
$x^3+x^2$, or
$(x^3+1)/x$, or
$(x^3+c)/(x+c)$, with $c\in\barFq\setminus\F_2$.
Let $\tau\in\F_{q^2}$ satisfy $\tau^2+\tau=\sigma$.
 %By our choice of $\sigma$ we have
 %$\tau^q+\tau=\sigma+\sigma^2+\sigma^4+\cdots+\sigma^{q/2}=1$
 %and $\tau^{q+1}=(\tau+1)\tau=\sigma$.
 %\fbox{Explain these last two equalities (referee)}
Since
$\sigma+\sigma^2+\sigma^4+\cdots+\sigma^{q/2}=1$
by our choice of $\sigma$, we have
\[
\tau^q+\tau
=
(\tau+\tau^2)+(\tau+\tau^2)^2+\cdots+(\tau+\tau^2)^{q/2}
%=
%\sigma+\sigma^2+\sigma^4+\cdots+\sigma^{q/2}
=1
\]
and, consequently,
$\tau^{q+1}=(\tau+1)\tau=\sigma$.

The simplest case is when $R(x)$ is equivalent to $x^3+x^2$ over $\barFq$, which occurs precisely when it has ramification indices $(3,2)$.
Because the ramification indices are different, each of the two ramification points must be fixed by the Galois automorphism $\alpha\mapsto\alpha^q$,
hence it belongs to $\Proj^1(\F_q)$, and so does the corresponding branch point.
Consequently, the ramification points can be assumed to be $\infty$ and $0$ and to coincide with the corresponding branch points,
yielding that $R(x)$ is equivalent to $x^3+x^2$ over $\F_q$.

Now suppose that $R(x)$ is equivalent to $x^3$ over $\barFq$, which occurs precisely when it has ramification indices $(3,3)$.
In this case the two ramification points could be either fixed or interchanged by the Galois automorphism $\alpha\mapsto\alpha^q$.
In fact, a cubic expression equivalent to $x^3$ over $\F_{q^2}$ but not over $\F_q$ is
\[
\frac{\tau x+\tau^q}{x+1}
\circ
x^3
\circ
\frac{x+\tau^q}{x+\tau}
=\frac{x^3+\sigma x+\sigma}{x^2+x+\sigma+1},
\]
which has $\tau$ and $\tau^q=\tau+1$ as ramification points, and also as the corresponding branch points.
An argument which we fully spelled out in the proof of Theorem~\ref{thm:x^r_finite} now shows that
$R(x)$ is either equivalent to $x^3$ over $\barFq$, or to the other expression found above.

If $R(x)$ is equivalent to $(x^3+1)/x$ over $\barFq$ then it has ramification indices $(2)$.
Hence its only ramification point is Galois-invariant and can be taken to be $\infty$, with $\infty$ as the corresponding branch point.
The further preimage of $\infty$ is also Galois-invariant and hence can be taken to be $0$, that is, $R(0)=\infty$.
As in the proof of Theorem~\ref{thm:cubic_expr_alg_closed_2},
we find that $R(x)$ is equivalent to $(x^3+c)/x$ for some $c\in\F_q^{\ast}$.
However, after those stipulations all freedom we have left is passing to
$a^2x\circ(x^3+c)/x\circ x/a=(x^3+a^3c)/x$.
If $q$ is not a square then $q\equiv -1\pmod{3}$, and so the cubing map is bijective on $\F_q$ and $R(x)$ is equivalent to $(x^3+1)/x$.
If $q$ is a square then $c$ equals a cube in $\F_q^\ast$ times either $1$, $\theta$, or $\theta^2$,
and hence $R(x)$ is equivalent to precisely one of the three expressions given.

Finally, suppose that $R(x)$ has ramification indices $(2,2)$.
Then according to Theorem~\ref{thm:cubic_expr_alg_closed_2} and Remark~\ref{rem:cubic_expr_alg_closed_2} it is equivalent to $(x^3+c)/(x+c)$,
over $\barFq$, for some $c\in\barFq\setminus\F_2$.
Uniqueness of $c$ implies its Galois-invariance, whence $c\in\F_q\setminus\F_2$.
Recall that such cubic expression has $\infty$ and $1$ as ramification points, coinciding with the corresponding branch points,
whose further preimages are $c$ and $0$.

Now if the ramification points of $R(x)$ belong to $\F_q$ then so do the corresponding branch points
and also their further preimages.
In this case the proof of Theorem~\ref{thm:cubic_expr_alg_closed_2} shows that $R(x)$ is actually equivalent to $(x^3+ax^2)/(x+1)$ over $\F_q$,
and in turn to $(x^3+c)/(x+c)$ according to Remark~\ref{rem:cubic_expr_alg_closed_2}.
This last expression has $\infty$ and $1$ as ramification points of index $2$, each coinciding with the corresponding branch point,
and the further preimages of the branch points are $c$ and $0$, respectively.

The other possibility is that the Galois automorphism $\alpha\mapsto\alpha^q$ interchanges the two ramification points of $R(x)$,
and consequently the corresponding branch points, and also their further preimages.
Setting $c=1/b^2$
we find
\[
B(x)\circ\frac{b^2x^3+1}{b^2x+1}\circ A(X)
=
\frac{x^3 +bx^2 +\sigma x +(b+1)\sigma}
{x^2 +x +b+1+\sigma},
\]
where
\[
A(x)=\frac{x+\tau^q(b+1)+b\tau}{bx+b\tau}
\qquad\text{and}\qquad
B(x)=\frac{b\tau x+\tau^q(b+1)+b\tau}{bx+1}.
\]
This cubic expression is equivalent to $(x^3+c)/(x+c)$ over $\F_{q^2}$ but not over $\F_q$, as it has ramification points $\tau$ and $\tau^q=\tau+1$.
These coincide with the corresponding branch points, whose further preimages are $\tau+b$ and $\tau^q+b$.
A similar argument as that in the proof of Theorem~\ref{thm:x^r_finite} now shows that
$R(x)$ is equivalent to this expression over $\F_q$.
\end{proof}

Note that when $q=2$ the cases (v) and (vi) of Theorem~\ref{thm:cubic_expr_finite_2} are vacuous,
hence any cubic expression over $\F_2$ is equivalent to one of the following four possibilities:
$x^3$, $(x^3+x+1)/(x^2+x)$, $x^3+x^2$, and $(x^2+1)/x$.
In general Theorem~\ref{thm:cubic_expr_finite_2} yields the following count of equivalence classes.

\begin{cor}\label{cor:orbit_count_char_2}
Over a finite field $\F_q$ of characteristic two there are exactly
$2q+2$ equivalence classes of cubic rational expressions if $q$ is a square, and $2q$ otherwise.
\end{cor}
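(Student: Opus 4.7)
The plan is to derive the corollary as an immediate consequence of the classification in Theorem~\ref{thm:cubic_expr_finite_2}, by adding up the number of equivalence classes contributed by each of its six cases. Since, as noted in the paragraph preceding Theorem~\ref{thm:cubic_expr_alg_closed_2}, in characteristic two every cubic rational expression has at most two ramification points, Theorem~\ref{thm:cubic_expr_finite_2} already gives a \emph{complete} classification of cubic expressions over $\F_q$, with no class left unaccounted for. Moreover, its wording ``equivalent to precisely one of the following'' guarantees that distinct listed representatives (including distinct values of the parameters $b$ and $c$) lie in distinct orbits.

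I would then count:
\begin{itemize}
\item Cases (i), (ii), (iii) each contribute exactly one equivalence class, for a subtotal of~$3$.
\item Case (iv) contributes a single class $(x^3+1)/x$ when $q$ is not a square, whereas when $q$ is a square it contributes three classes, represented by $(x^3+1)/x$, $(x^3+\theta)/x$, and $(x^3+\theta^2)/x$.
\item Case (v) contributes one class per value of the parameter $c\in\F_q\setminus\F_2$, hence exactly $q-2$ classes.
\item Case (vi) similarly contributes $q-2$ classes, one per $b\in\F_q\setminus\F_2$.
\end{itemize}
Summing gives $3+1+(q-2)+(q-2)=2q$ when $q$ is not a square, and $3+3+(q-2)+(q-2)=2q+2$ when $q$ is a square, which is exactly the claim.

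There is essentially no obstacle here: the real work has already been done in establishing Theorem~\ref{thm:cubic_expr_finite_2}, in particular the separation of case~(iv) into subcases depending on whether cubing is bijective on $\F_q^\ast$ (governed by $q\equiv -1\pmod 3$, which in characteristic two is equivalent to $q$ being a non-square), and the uniqueness of the parameters $b$ and $c$ in cases (v) and (vi). The only thing to be careful about in writing the proof is to cite Theorem~\ref{thm:cubic_expr_finite_2} for both the exhaustiveness of the list and the pairwise inequivalence of its entries, and then perform the short arithmetic above.
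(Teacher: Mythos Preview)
Your proposal is correct and matches the paper's approach: the corollary is stated immediately after Theorem~\ref{thm:cubic_expr_finite_2} as a direct count of the classes listed there, with no further proof given. Your write-up simply makes that count explicit, and the arithmetic is right.
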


\begin{theorem}\label{thm:cubic_expr_finite_2_count}
Assume the hypotheses and notation of Theorem~\ref{thm:cubic_expr_finite_2}.
Of all the $q^5(q^2-1)$ distinct cubic rational expressions over $\F_q$, precisely
\begin{itemize}
\item[(i)]
$q^2(q^2-1)(q+1)/2$ are equivalent to $x^3$;
\item[(ii)]
$q^2(q^2-1)(q-1)/2$ are equivalent to
$\displaystyle
\frac{x^3+\sigma x+\sigma}{x^2+x+\sigma+1}$;
\item[(iii)]
$q^2(q^2-1)^2$ are equivalent to
$x^3 + x^2$;
\item[(iv)]
$q^2(q^2-1)^2$ are equivalent to one of
$(x^3+1)/x$,
$(x^3+\theta)/x$ or $(x^3+\theta^2)/x$,
equally split among these in case $q$ is a square;
\item[(v)]
$q^2(q^2-1)^2/2$ are equivalent to
$(x^3+c)/(x+c)$,
for each given $c\in\F_q\setminus\F_2$;
\item[(vi)]
$q^2(q^2-1)^2/2$ are equivalent to
$\displaystyle
\frac{x^3 +bx^2 +\sigma x +(b+1)\sigma}
{x^2 +x +b+1+\sigma}$,
for each given $b\in\F_q\setminus\F_2$.
\end{itemize}
\end{theorem}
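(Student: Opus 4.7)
The plan is to apply the orbit-stabilizer theorem with $|G(\F_q)|=|\PGL_2(\F_q)|^2=q^2(q^2-1)^2$ to each of the six representatives from Theorem~\ref{thm:cubic_expr_finite_2}, and check that the resulting orbit lengths add up to the total $q^5(q^2-1)$ given by Lemma~\ref{lemma:coprime_probability}. For any cubic $R(x)$, a stabilizer element $(B,A)$ must permute the ramification points of $R$ via $A$, permute the branch points via $B$, and do so compatibly with the fibers. In each case I would first narrow down the possible permutations, using both the ramification indices and the requirement that $A$ and $B$ be defined over $\F_q$, and then compute the stabilizer size exactly.

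Cases (i), (ii) both have ramification indices $(3,3)$ and follow the template of Theorem~\ref{thm:quadratic_finite_count}(i),(ii). For $R=x^3$ the ramification/branch set is $\{\infty,0\}\subset\Proj^1(\F_q)$, and a short calculation should show that the stabilizer consists of the $q-1$ pairs $(a^3x,ax)$ together with the $q-1$ pairs $(a^3/x,a/x)$, for $a\in\F_q^{\ast}$, yielding order $2(q-1)$. For case~(ii) the ramification set is the Galois orbit $\{\tau,\tau^q\}$, and the setwise stabilizer of this pair in $\PGL_2(\F_q)$ has order $2(q+1)$; the same type of calculation as in Theorem~\ref{thm:quadratic_finite_count}(ii) should show that the stabilizer in $G(\F_q)$ also has order $2(q+1)$. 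For case~(iii) the two ramification points $\infty,0$ have distinct indices $3,2$, so each must be individually fixed by $A$, forcing $A(x)=ax$ and $B(x)=bx$; a direct substitution into $R=x^3+x^2$ then forces $a=b=1$, so the stabilizer is trivial.

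Case~(iv) is slightly more subtle. For $R(x)=(x^3+c)/x$ the only ramification point is $\infty$, whose fiber is $\{\infty,0\}$, so $A$ must preserve this pair; since $A$ fixes $\infty$, it also fixes $0$, whence $A(x)=ax$ and, similarly, $B(x)=dx+e$. Substituting into $B\circ R\circ A^{-1}=R$ and comparing coefficients in the expression $R(x)=x^2+c/x$ should force $e=0$, $d=a^2$, and $a^3=1$. The stabilizer thus has order $\gcd(3,q-1)$, which is $3$ or $1$ according as $q$ is or is not a square, matching the stated orbit count: three orbits of length $q^2(q^2-1)^2/3$ (one for each cube coset representative $1,\theta,\theta^2$) in the first subcase, a single orbit of length $q^2(q^2-1)^2$ in the second.

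Cases~(v) and~(vi) share the pattern of ramification indices $(2,2)$ with each ramification point equal to its own branch point. For case~(v) the four distinguished points $\infty,1,c,0$ are pairwise distinct elements of $\Proj^1(\F_q)$, and the stabilizer analysis splits according to whether $A$ fixes or swaps the two ramification points $\infty$ and $1$: the trivial case gives $(\mathrm{id},\mathrm{id})$, while the swap forces $A$ to exchange $c$ and $0$ as well, hence $A(x)=(x+c)/(x+1)$, and a short computation of $R\circ A$ then identifies $B=A$. Case~(vi) is analogous, with ramification points $\tau,\tau^q$ Galois-conjugate over $\F_q$ and further preimages $\tau+b,\tau^q+b$; the main obstacle is checking that the unique Möbius transformation over $\barFq$ specified by the three required values is in fact defined over $\F_q$, for which the Galois-descent argument of Theorem~\ref{thm:x^r_finite} applies once more. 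In both cases the stabilizer has order~$2$, giving orbit length $q^2(q^2-1)^2/2$. Finally, summing the six orbit lengths yields exactly $q^5(q^2-1)$, which confirms both the individual counts and that no further orbits exist.
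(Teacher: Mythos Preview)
Your proposal is correct and follows essentially the same approach as the paper: apply orbit--stabilizer by constraining $A$ via the ramification points (and their fibers) and $B$ via the branch points, then compute explicitly. The only notable differences are cosmetic: in case~(ii) the paper conjugates the known stabilizer of $x^3$ inside $G(\F_{q^2})$ and reads off which elements are Frobenius-fixed, rather than invoking the odd-characteristic template; and in case~(vi) no Galois-descent argument is needed, since the nontrivial stabilizer element is simply $(x+1,\,x+1)$, visibly defined over $\F_2$.
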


Adding up these numbers, allowing for $b,c$ to vary as appropriate, gives the expected total of $q^5(q^2-1)$.

\begin{proof}
\quad(i)\quad
Each element of the stabilizer of $x^3$, that is, a pair $(B,A)\in G(\F_q)$ such that $B(x)\circ x^3\circ A^{-1}(x)$,
must either fix or interchange the two ramification points of $x^3$, and similarly for the branch points.
This fact and a simple calculation show that the stabilizer consists of the pairs $(B,A)=(a^3x,ax)$
and $(a^3/x,a/x)$, with $a\in\F_q^\ast$.
Consequently, the stabilizer has order $2(q-1)$, whence the orbit of $x^3$ has length
$|G(\F_q)|/\bigl(2(q-1)\bigr)=q^2(q^2-1)(q+1)/2$.

\quad(ii)\quad
The expression $R(x)$ under consideration is equivalent to $x^3$ over $\F_{q^2}$, and hence its stabilizer $T_R$ in $G(\F_{q^2})$ is conjugated to
the one described in (i), but with $a$ ranging in $\F_{q^2}^{\ast}$, call that $T_{x^3}$.
With notation as in the proof of Theorem~\ref{thm:cubic_expr_finite_2} we have
\[
T_R=
\left(
\frac{\tau x+\tau^q}{x+1},
\frac{\tau x+\tau^q}{x+1}
\right)
\cdot
T_{x^3}
\cdot
\left(
\frac{x+\tau^q}{x+\tau},
\frac{x+\tau^q}{x+\tau}
\right),
\]
from which one may work out explicit expressions for all elements of $T_R$
and then decide which of them belong to $G(\F_q)$.
The second component of the element of $T_R$ conjugated to $(B,A)=(a^3x,ax)\in T_{x^3}$ is
\[
\frac{\tau x+\tau^q}{x+1}
\circ
ax
\circ
\frac{x+\tau^q}{x+\tau}
=
\frac{(a\tau+\tau^q)x+(a+1)\tau^{q+1}}
{(a+1)x+(a\tau^q+\tau)},
\]
and we see that the $q$th power automorphism maps this to the corresponding conjugate of $a^{-q}x$.
An analogous assertion clearly holds for the first component,
and a similar calculation would yield the same conclusion when taking $(B,A)=(a^3/x,a/x)\in T_{x^3}$.
Consequently, the elements of $T_R$ which are fixed by the $q$th power automorphism are precisely those conjugated to those in $T_{x^3}$ satisfying $a^{-q}=a$.
Therefore $|T_R\cap G_(\F_q)|=2(q+1)$, and so the orbit of $R(x)$ has length
$|G(\F_q)|/\bigl(2(q+1)\bigr)=q^2(q^2-1)(q-1)/2$.

\quad(iii)\quad
Because the two ramification points $\infty$ and $0$ of $x^3+x^2$ have different indices,
each must be fixed by its stabilizer of $R(x)$, and similarly the branch points.
Therefore, any element in the stabilizer has the form $(B,A)=(bx,ax)$ for some $a,b\in\F_q$,
and one concludes at once that $a=b=1$.
Hence in this case the stabilizer is trivial, and the orbit has length
$|G(\F_q)|=q^2(q^2-1)^2$.

\quad(iv)\quad
The expression $(x^3+c)/x$ has the single ramification point $\infty$, which is also the branch point, whose other preimage is $0$.
Consequently, if $(B,A)$ is in its stabilizer then $A$ must fix each of $\infty$ and $0$, and $B$ must fix $\infty$.
A quick calculation then shows that the stabilizer consists of the pairs $(ax,x/a)$ with $a^3=1$.
Hence the orbit has length
$|G(\F_q)|/3=q^2(q^2-1)^2/3$ when $3$ divides $q-1$,
which occurs precisely when $q$ is a square, and
$q^2(q^2-1)^2$ otherwise.

\quad(v)\quad
The expression
$(x^3+c)/(x+c)$,
for some $c\in\F_q\setminus\F_2$,
has $\infty$ and $1$ as ramification points of index $2$, each coinciding with the corresponding branch point,
and that the further preimages of the branch points are $c$ and $0$, respectively.
Hence any element of its stabilizer in $G(\F_q)$ either fixes all these four points,
or interchanges $\infty$ with $1$ and $c$ with $0$.
Consequently, the stabilizer consists of the identity and the pair $\bigl((x+c)/(x+1),(x+c)/(x+1)\bigr)$,
and so the orbit has length
$|G(\F_q)|/2=q^2(q^2-1)^2/2$.

\quad(vi)\quad
This case is similar to the previous case.
The expression under consideration has
has $\tau$ and $\tau^q=\tau+1$ as ramification points of index $2$, each coinciding with the corresponding branch point,
and the further preimages of the branch points are $\tau+b$ and $\tau^q+b$, respectively.
Arguing as in the previous case one concludes that its stabilizer consists of the identity and the pair $(x+1,x+1)$,
and so the orbit has length
$|G(\F_q)|/2=q^2(q^2-1)^2/2$.
\end{proof}

According to Theorems~\ref{thm:cubic_expr_finite_2} and~\ref{thm:cubic_expr_finite_2_count},
a `random' cubic expressions over a finite field $\F_q$ of characteristic two
is equally likely to have one of the three ramification types $(3,2)$, $(2)$ or $(2,2)$,
with the remaining ramification type $(3,3)$ becoming less likely as $q$ grows.

\bibliography{References}
\end{document}